\documentclass{article}
\usepackage[utf8]{inputenc}
\usepackage{amsmath}
\usepackage[utf8]{inputenc}
\usepackage[english]{babel}

\usepackage[arrow,matrix,curve,cmtip,ps]{xy}

\usepackage{amsthm}
\usepackage{comment}
\theoremstyle{definition}
\newtheorem{definition}{Definition}[section]

\theoremstyle{plain}

\newtheorem{theorem}{Theorem}[section]

\newtheorem{corollary}[theorem]{Corollary}

\newtheorem{proposition}[theorem]{Proposition}

\newtheorem{lemma}[theorem]{Lemma}

%


\theoremstyle{definition}

\newtheorem{remark}[theorem]{Remark}

\newtheorem{example}[theorem]{Example}

%



\usepackage[utf8]{inputenc}
\usepackage[english]{babel}

\usepackage{amsfonts}

\title{Regular Ideals of Locally-Convex Higher-Rank Graph Algebras}
\author{Tim Schenkel}
\date{\today}

\begin{document}
\maketitle
\begin{abstract}
    We give a vertex set description for basic, graded, regular ideals of locally-convex Kumjian-Pask Algebras. We also show that Condition $(B)$ is preserved when taking the quotient by a basic, graded, regular ideal. We further show that when a locally-convex, row-finite $k$-graph satisfies Condition $(B)$, all regular ideals are graded. We then show the same things hold for gauge-invariant, regular ideals in locally-convex $k$-graph $C^{*}$-algebras.  
\end{abstract}
\section{Introduction}

In this paper we will study the regular ideals of higher-rank graph algebras. We study them first in the algebraic setting: Kumjian-Pask Algebras. We then look at them in the analytic setting of $k$-graph $C^{*}$-algebras.

Higher-rank graph $C^{*}$-algebras were first introduced in \cite{Kumjian Pask} in 2000. In Kumjian and Pask's work, they focus on row-finite $k$-graphs with no sources. They were able to show that the gauge-invariant uniqueness theorem could be generalized from graph $C^{*}$-algebras. Additionally, they were able to find conditions for simplicity. This gave evidence to support the hope that much of the theory from graph $C^{*}$-algebras might carry over. These findings were further generalized in \cite{RaeburnSimsYeend}, where Raeburn, Sims, and Yeend introduced the concept of local-convexity in this setting. The local-convexity condition allows for sources to appear in the graphs. They were able to prove a generalization of the Cuntz-Krieger Uniqueness Theorem and show that there is a lattice isomorphism from the saturated, hereditary sets of vertices to the gauge-invariant ideals of $k$-graph $C^{*}$-algebras.

Much like the creation of  Leavitt path algebras as an algebraic analogue of graph $C^{*}$-algebras, Kumjian-Pask algebras were created as an algebraic analogue of $k$-graph $C^{*}$-algebras. They were first introduced in \cite{Kumjian Pask Algebra}. The authors were able to prove many algebraic analogues to theorems proved in the $C^{*}$-algebra setting, included proofs of the uniqueness theorems and the lattice isomorphism of saturated and hereditary sets of vertices to the basic, graded ideals. These works were again generalized to the locally-convex setting in \cite{ClarkFlynnHuef}.

In the recent work of \cite{BrownFullerPittsReznikoff} and \cite{GoncalvesRoyer}, the regular ideals of row-finite, no-source graph $C^{*}$-algebras and Leavitt-Path algebras respectively were studied. In both of \cite{BrownFullerPittsReznikoff} and \cite{GoncalvesRoyer} a vertex description of the regular, gauge-invariant ideals and regular, graded ideals were found respectively. As directed graphs can be seen as $1$-graphs, the current paper generalizes both into higher-rank and by allowing for sources. In \cite{BrownFullerPittsReznikoff} and \cite{GoncalvesRoyer} it was shown that Condition $(L)$ (a graph satisfies Condition $(L)$ if every cycle has an entry) is preserved when quotienting by regular ideals.

 In this paper we provide clear vertex set descriptions for the basic, graded, regular ideals of Kumjian-Pask algebras, Theorem \ref{3}, and the gauge-invariant regular ideals of $k$-graph $C^{*}$-algebras, Theorem \ref{4}. 
 Condition $(B)$ was introduced in \cite{RaeburnSimsYeend} as a generalization of Condition $(L)$ for locally-convex higher-rank-graphs. For this reason it is a natural fit for replacing Condition $(L)$ in theorems similar to those seen in \cite{BrownFullerPittsReznikoff} and \cite{GoncalvesRoyer}. We see in Theorem \ref{5} and Theorem \ref{6} that Condition $(B)$ is preserved in the graph when quotienting by a regular ideal, which is not the case for arbitrary quotients. 
 Condition $(B)$ was introduced to extend the Cuntz-Krieger Uniqueness Theorem for $k$-graphs with sources \cite{RaeburnSimsYeend}. 
We further show in Corollaries \ref{60} and \ref{61} that when a locally-convex, row-finite $k$-graph satisfies Condition $(B)$ that all regular ideals of the Kumjian-Pask algebra are graded and all regular ideals of the $k$-graph C$^{*}$-algebra are gauge-invariant respectively.

\subsection*{Acknowledgement}
Thank you to my advisor Dr. Adam Fuller for the guidance and help throughout this paper as part of my dissertation.

\section{Background: $k$-Graphs}

The information on $k$-graphs will pertain to both Kumjian-Pask algebras and $k$-graph $C^{*}$-algebras so we include it here in its own section. We use the definitions from \cite{Kumjian Pask}, \cite{Sims} and \cite{ClarkFlynnHuef}. 
\begin{definition}
 Let $k \in \mathbb{N}\setminus \{0\}$. A $k$-graph is a pair $(\Lambda , d)$ where $\Lambda$ is a countable category and $d$ is a functor from $\Lambda$ to $\mathbb{N}^{k}$ which satisfies the factorization property: for all $\lambda \in Mor(\Lambda)$ and all $m,n\in \mathbb{N}^{k}$ such that $d(\lambda)=m + n$, there exists unique morphisms $\mu, \nu$ in $Mor(\Lambda)$ such that $d(\mu)=m$, $d(\nu)=n $ and $\lambda = \mu \nu$. 
 \end{definition}
 We put a partial order on the elements of $\mathbb{N}^{k}$ in the following way: we say that $n \leq m$ if and only if $ n_{i} \leq m_{i}$ for all $i$ , where $m = (m_{1}, m_{2}, . . . ,m_{k})$, and $n = (n_{1}, n_{2}, . . . , n_{k})$.
 We refer to elements of $Mor(\Lambda)$ as paths and write $r$ and $s$ for the domain and codomain maps respectively. 
 
 \par Our factorization property gives us that $d(\lambda)=0$ if and only if $\lambda = id_{v}$ for some $v \in Obj(\Lambda)$. We often refer to the elements of $Obj(\Lambda)$ as vertices. Thus we identify $Obj(\Lambda)$ with $\{ \lambda \in Mor(\Lambda) : d(\lambda) = 0 \}$, and write $\lambda \in \Lambda$ in place of $\lambda \in Mor(\Lambda)$.
 For any $\lambda \in \Lambda$ and $E \subset \Lambda$ we define $$ \lambda E := \{ \lambda \mu : \mu \in E, \ r(\mu) = s(\lambda) \}$$ and $$E \lambda := \{\mu \lambda : \mu \in E, \  s(\mu) = r(\lambda)\}.$$
By the factorization property we know that for any $l \leq m \leq n \in \mathbb{N}^{k}$ if $d(\lambda)=n$ then there are unique elements denoted $\lambda(0,l) ,\  \lambda(l,m),\  \lambda(m,n)$ of $\Lambda$ such that $d(\lambda (0,l))=l ,\  d(\lambda (l,m)) = m-l, \  d(\lambda(m,n))=n-m$ and \[\lambda = \lambda (0,l) \lambda(l,m) \lambda(m,n).\] 
For $m \in \mathbb{N}^{k}$ we define $\Lambda ^{m}= \{ \lambda \in \Lambda : d(\lambda) = m \}$. Thus $Obj(\Lambda) = \Lambda^{0}$.
We say that a $k$-graph is row-finite if for each $v \in \Lambda ^{0}$ and each $m \in \mathbb{N}^{k}$, the set $\Lambda^{m}(v)$ is finite. We say that $v \in \Lambda^{0}$ is a source if there exists $m \in \mathbb{N}^{k}$ such that $\Lambda^{m}(v) = \emptyset$.

As an example of a $k$-graph we offer up a common one that is used in many descriptions in research and later in this paper. We will use the notation from \cite{ClarkFlynnHuef}.
\begin{example}
For a fixed $m \in (\mathbb{N}\cup \{ \infty \})^{k}$: we define \[\Omega_{k,m}:= \{ (p,q) \in \mathbb{N}^{k} \times \mathbb{N}^{k} :p \leq q \leq m \}. \] The objects are $\Omega^{0}_{k,m} = \{ p \in \mathbb{N}^{k} : p \leq m \} $, and range and source maps $r(p,q)=p$ and $s(p,q)=q$. The morphisms $(p,q)$ and $(r,s)$ are composable if and only if $q=r$. When they are composable we have $(p,q)(q,s)=(p,s)$. The factorization property is fulfilled by $d:\Omega^{k,m} \mapsto \mathbb{N}^{k}$ defined by $d((p,q))=q-p$. So we have that the pair $(\Omega_{k,m},d)$ is a $k$-graph.
\end{example}
We introduce the set $\Lambda ^{\leq n}$ consisting of paths $\lambda$ with $d(\lambda) \leq n$ which cannot be extended to paths $\lambda \mu$ with $d(\lambda) < d(\lambda \mu) \leq n$. Thus 
\[\Lambda ^{\leq n} := \{ \lambda \in \Lambda :d(\lambda) \leq n, \text{ and } d(\lambda)_{i} < n_{i} \text{ implies } s(\lambda)\Lambda^{e_{i}}= \emptyset \}. \] We have that $v\Lambda^{\leq n} := v\Lambda \cap \Lambda^{\leq n} $ for $v \in \Lambda ^{0}$ is always nonempty.

We will be interested in row-finite locally-convex $k$-graphs throughout the paper, which were introduced in \cite{ClarkFlynnHuef} and \cite{RaeburnSimsYeend} 
\begin{definition}
We say $\Lambda$ is \emph{locally-convex} if for every $v\in \Lambda^{0}, 1\leq i$, $j\leq k$ with $i\neq j$, $ \lambda \in v \Lambda^{e_{i}}$ and $  \mu \in v \Lambda ^{e_{j}}$, the sets $s(\lambda)\Lambda^{e_{j}}$ and $s(\mu ) \Lambda ^{e_{i}}$ are nonempty.
\end{definition}
It is worth noting that by \cite[Remark 3.10]{RaeburnSimsYeend} every row-finite $1$-graph is locally-convex, as are all row-finite, higher-rank graphs with no sources.

 As we hope to give a vertex description of the regular ideals, it is natural to introduce some vertex sets which will help us classify ideals in our respective $k$-graph algebras.

\begin{definition} 
Let $(\Lambda, d)$ be a locally-convex row-finite $k$-graph. We say a subset $H$ of $\Lambda^{0}$ is \emph{hereditary} if $\lambda \in \Lambda$ and $r(\lambda) \in H$ imply $s(\lambda) \in H$. We say that $H$ is \emph{saturated} if for $v \in \Lambda^{0}, \  s(v\Lambda^{\leq e_{i}}) \subset H$ for some $i \in \{ 1, . . ..., k \}$ implies $v \in H$.
\end{definition}

As final definitions we introduce boundary paths of locally-convex, row-finite $k$-graphs and Condition ($B$). 
Boundary paths are used to put certain conditions on our $k$-graphs including aperiodicity in \cite{Kumjian Pask} and Condition $(C)$ of \cite{Sims}. We use it here to define Condition ($B$). Aperiodicity was a condition that Kumjian and Pask used along with cofinality to find a simplicity condition of row-finte, no source $k$-graph $C^{*}$-algebras. It served as an analogue to Condition ($L$) of directed graphs. When switching to row-finite locally-convex $k$-graph $C^{*}$-algebras, Raeburn, Sims, and Yeend made Condition ($B$) as an analogue of aperiodicity to allow for sources. With this they were able to prove the Cuntz-Krieger Uniqueness Theorem. By Theorem 8.4 of \cite{ClarkFlynnHuef} when a row-finite $k$-graph has no sources, Condition $(B)$ is equivalent to the aperiodicity condition. 

\begin{definition}
Let $\Lambda$ be a locally convex $k$-graph. A \emph{boundary path in $ \Lambda$} is a graph morphism $x: \Omega _{k,m} \mapsto \Lambda$ for some $m \in (\mathbb{N} \cup \infty)^{k}$ such that, whenever $v \in Obj(\Omega_{k.m})$ satisfies $v(\Omega_{k.m})^{\leq e_{i}} = \{ v \}$, we also have that $x(v)\Lambda^{\leq e_{i}} = \{ x(v) \}$. We denote the collection of all boundary paths in $\Lambda$ by $\Lambda ^{\leq \infty}.$ The range map of $\Lambda$ extends naturally to $\Lambda ^{\leq \infty}$ via $r(x) := x(0).$ For $v \in \Lambda^{0}$, we write $v\Lambda^{\leq \infty}$ for $\{x \in \Lambda^{\leq \infty} : r(x) = v \} $.

\end{definition}
If $\Lambda$ has no sources, then $\Lambda ^{\leq \infty} = \Lambda ^{\infty}$. It should also be noted that a boundary path can be composed with finite paths. That is if $x$ is a boundary path and $ \lambda \in \Lambda$ is a finite path with $s(\lambda) = r(x) $ then we define $ \lambda x: \Omega_{k, m+d(\lambda)} \mapsto \Lambda $ such that $$ \lambda x (d(\lambda)) = r(x), \ \lambda x (0) = r(\lambda),$$  $$\lambda x (l, l + e_{i}) = \lambda(l, l+e_i) \text{ if } l+e_{i} \leq d(\lambda),$$ $$\lambda x (d(\lambda) + l, d(\lambda) + l + e_{i}) = x(l + l+ e_{i}) \text{ if } l+e_{i} \leq m.$$ The rest of the graph morphism can be obtained by concatenating paths of length $e_{i}$. We have that $\lambda x$ is also a boundary path. For more information the reader can refer to \cite{ClarkFlynnHuef} and \cite{RaeburnSimsYeend}. 
\begin{definition}
We say that a vertex $v$ in a $k$-graph, $\Lambda$, satisfies Condition ($B$) if; \[ \text{ there exists } x\in v\Lambda ^{\leq \infty} \text{ such that } \alpha \neq \beta \in \Lambda \text{ implies } \alpha x \neq \beta x \]
We say that a $k$-graph, $\Lambda$, satisfies condition $(B)$ if every vertex in $\Lambda$ satisfies Condition $(B)$.
\end{definition}

\section{Background: Kumjian-Pask Algebras}
As Kumjian-Pask algebras are an algebraic generalization of $k$-graph $C^{*}$-algebras, we introduce ghost paths to take the place of adjoints. 
We will again be using the definitions and ideas from \cite{ClarkFlynnHuef}.
\begin{definition}

Define $G(\Lambda):= \{ \lambda^{*}:\lambda \in \Lambda \}$, and call each $\lambda^{*}$ a \emph{ghost path}. If $v\in \Lambda^{0}$, then we identify $v$ and $v^{*}$. We extend the degree functor $d$ and the range and source maps $r$ and $s$ to $G(\Lambda)$ by $d(\lambda^{*})=-d(\lambda)$, $r(\lambda^{*})=s(\lambda)$ and $s(\lambda^{*})=r(\lambda)$. We extend the factorization property to the ghost paths by setting $(\mu \lambda)^{*}= \lambda^{*} \mu^{*}$. We denote by $\Lambda^{\neq 0}$ the set of paths which are not vertices and by $G(\Lambda^{\neq 0})$ the set of ghost paths that are not vertices.
\end{definition}

\begin{definition}
Let $ \Lambda$ be a row-finite $k$-graph and let $R$ be a commutative ring with 1. A Kumjian-Pask $\Lambda$-family $(P, S)$ in an $R$-algebra $A$ consists of two functions $P: \Lambda ^{0} \mapsto A$ and $S: \Lambda ^{\neq 0} \cup G(\Lambda ^{\neq 0}) \mapsto A$ such that:
\begin{enumerate}
    \item[(KP1)] $\{P_{v}:v \in \Lambda^{0}\}$ is a family of mutually orthogonal idempotents;
    \item[(KP2)] for all $\lambda, \mu \in \Lambda ^{\neq 0}$ with $r(\mu) =s(\lambda)$, we have $S_{\lambda}S_{\mu} = S_{\lambda \mu} ,\  S_{\mu^{*}}S_{\lambda^{*}}= S_{(\lambda \mu)^{*}},\  P_{r(\lambda)}S_{\lambda}= S_{\lambda}=S_{\lambda}P_{s(\lambda)},  
    \text{ and } P_{s(\lambda)}S_{\lambda ^{*}}=S_{\lambda ^{*}}=S_{\lambda ^{*}}P_{r(\lambda)}$;
    \item[(KP3)]  for all $n\in \mathbb{N}^{k} \setminus \{0\}$ and $\lambda , \mu \in \Lambda^{\leq n}$, we have $S_{\lambda ^{*}}S_{\mu} = \delta_{\lambda , \mu } P_{s(\lambda)}$;
    \item[(KP4)] for all $v \in \Lambda^{0} $ and all $n \in \mathbb{N}^{k} \setminus \{ 0 \}$, we have $P_{v} = \sum _{ \lambda \in v \Lambda ^{\leq n}}S_{\lambda}S_{\lambda ^{*}}$.
\end{enumerate}
\end{definition}

\begin{theorem}
\cite[Proposition 3.3]{ClarkFlynnHuef}\label{7}. Let $\Lambda$ be a locally-convex, row-finite $k$-graph, $(P,S)$ a Kumjian–Pask $\Lambda$-family in an $R$-algebra $A$, and $\lambda, \mu \in \Lambda$. If $n \in \mathbb{N}^{k}$ such that $d(\lambda), \ d(\mu) \leq n$, then $S_{\lambda^{*}}S_{\mu}=\sum _{\lambda \alpha = \mu \beta , \lambda \alpha \in \Lambda ^{\leq n}} S_{\alpha}S_{\beta^{*}}$.
\end{theorem}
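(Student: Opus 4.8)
The plan is to compute $S_{\lambda^*}S_\mu$ by inserting a resolution of the identity at the range vertex of $\mu$ and then using the factorization property to match initial segments. Write $w=r(\mu)$. Since $S_\mu=P_wS_\mu$ by (KP2), and since (KP4) expands $P_w$ at level $n$ (the case $n=0$ is trivial, forcing $\lambda=\mu=w$; otherwise $n\in\mathbb{N}^k\setminus\{0\}$ and (KP4) applies), I would first write
\[ S_{\lambda^*}S_\mu \;=\; S_{\lambda^*}P_w S_\mu \;=\; \sum_{\nu\in w\Lambda^{\le n}} \bigl(S_{\lambda^*}S_\nu\bigr)\bigl(S_{\nu^*}S_\mu\bigr). \]
Everything then reduces to understanding a single product $S_{\lambda^*}S_\nu$ for $\nu\in w\Lambda^{\le n}$.

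Before expanding I would record the \emph{equal-degree orthogonality relation}: if $d(\gamma)=d(\delta)$ then $S_{\gamma^*}S_\delta=\delta_{\gamma,\delta}P_{s(\gamma)}$. This is immediate from the axioms, because a path of degree exactly $m$ automatically lies in $\Lambda^{\le m}$ (the defining implication is vacuous when $d(\gamma)_i=m_i$ for all $i$), so (KP3) applies, with (KP1) covering $m=0$. Combining this with (KP2), set $p=d(\lambda)\wedge d(\nu)$ and factor $\lambda=\lambda(0,p)\lambda(p,d(\lambda))$, $\nu=\nu(0,p)\nu(p,d(\nu))$; then the middle product $S_{\lambda(0,p)^*}S_{\nu(0,p)}$ collapses to $\delta_{\lambda(0,p),\nu(0,p)}P_u$ with $u=s(\lambda(0,p))$. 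Hence $S_{\lambda^*}S_\nu=0$ unless $\lambda$ and $\nu$ agree on their common initial segment, in which case $S_{\lambda^*}S_\nu=S_{\lambda(p,d(\lambda))^*}S_{\nu(p,d(\nu))}$. (In particular a term with $r(\lambda)\neq w$ already fails at $p=0$, so the reduction to $r(\lambda)=r(\mu)$ is built in.)

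The crux is to show that for $\nu\in w\Lambda^{\le n}$ a surviving term forces $d(\lambda)\le d(\nu)$ with $\lambda=\nu(0,d(\lambda))$, so that $\lambda(p,d(\lambda))$ is a vertex and $S_{\lambda^*}S_\nu=S_{\nu(d(\lambda),d(\nu))}=S_\alpha$, where $\nu=\lambda\alpha$. Suppose instead $d(\lambda)_i>d(\nu)_i$ for some $i$. Then $d(\nu)_i<n_i$, so the definition of $\Lambda^{\le n}$ gives $s(\nu)\Lambda^{e_i}=\emptyset$; on the other hand $\lambda(p,p+e_i)$ is a direction-$i$ edge with range $u$, so $u\Lambda^{e_i}\neq\emptyset$, while $\nu(p,d(\nu))$ runs from $u$ to $s(\nu)$ with degree supported off coordinate $i$. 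Iterating local convexity along $\nu(p,d(\nu))$ then propagates a direction-$i$ edge all the way to $s(\nu)$, contradicting $s(\nu)\Lambda^{e_i}=\emptyset$. Thus no such incomparable $\nu$ occurs, and symmetrically $S_{\nu^*}S_\mu\neq0$ forces $\mu=\nu(0,d(\mu))$ with $S_{\nu^*}S_\mu=S_{\beta^*}$, $\nu=\mu\beta$. I expect this iterated local-convexity step to be the main obstacle, since it is precisely where the standing hypotheses enter and where the argument departs from the no-sources case.

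Finally I would assemble the surviving terms. The $\nu\in w\Lambda^{\le n}$ that contribute are exactly the common extensions $\nu=\lambda\alpha=\mu\beta$ lying in $\Lambda^{\le n}$, each contributing $(S_{\lambda^*}S_\nu)(S_{\nu^*}S_\mu)=S_\alpha S_{\beta^*}$; moreover $\nu\mapsto(\alpha,\beta)=\bigl(\nu(d(\lambda),d(\nu)),\,\nu(d(\mu),d(\nu))\bigr)$ is a bijection onto the index set of the claimed sum, with inverse $(\alpha,\beta)\mapsto\lambda\alpha$. This yields $S_{\lambda^*}S_\mu=\sum_{\lambda\alpha=\mu\beta,\ \lambda\alpha\in\Lambda^{\le n}}S_\alpha S_{\beta^*}$, as desired.
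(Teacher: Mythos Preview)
The paper does not prove this statement; it simply quotes it as \cite[Proposition~3.3]{ClarkFlynnHuef}, so there is no in-paper argument to compare against. Your proof is correct and is essentially the standard one: insert the (KP4) expansion of $P_{r(\mu)}$ at level $n$, reduce $S_{\lambda^*}S_\nu$ via the equal-degree case of (KP3) after factoring at $p=d(\lambda)\wedge d(\nu)$, and use local convexity together with $\nu\in\Lambda^{\le n}$ to rule out the incomparable case, leaving exactly the common extensions $\nu=\lambda\alpha=\mu\beta\in\Lambda^{\le n}$. The local-convexity propagation step you flag as the crux is indeed the only place the locally-convex hypothesis is used, and your edge-by-edge induction along $\nu(p,d(\nu))$ (whose degree has $i$th coordinate zero) is the right way to push the $e_i$-edge from $u$ down to $s(\nu)$.
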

\begin{definition}
We define \emph{$KP_{R}(\Lambda)$} to be the universal $R$-algebra generated by a Kumjian–Pask $\Lambda$-family $(p,s)$, in the sense that if $(Q,T)$ is a Kumjian–Pask $\Lambda$-family in an $R$-algebra $A$, then there exists a $R$-algebra homomorphism $\pi_{Q,T} : KP_{R}(\Lambda) \mapsto A $ such that $\pi_{Q,T} \circ p = Q$ and $\pi_{Q,T} \circ s = T$. For every $r\in R \setminus \{ 0 \} $ and $v \in \Lambda^{0}$, we have $rp_{v} \neq 0$. 
\end{definition}
Grading plays an important role in the lattice of ideals of Kumjian-Pask algebras. We take time here to define gradings, as it will pertain to later theorems.
\begin{definition}
Let $G$ be an additive abelian group. A ring $A$ is \emph{$G$-graded} if there are additive subgroups $\{ A_{g}:g \in G \} $ of $A$ such that $A_{g}A_{h} \subset A_{g+h} $ and every nonzero $a\in A$ can be written in exactly one way as a finite sum $\sum _{g\in F} a_{g} $ of nonzero elements $a_{g} \in A_{g}$. The elements of $A_{g}$ are \emph{homogeneous of degree $g$}, and $a=\sum _{g \in F} a_{g} $ is the \emph{homogeneous decomposition of $a$}.
\end{definition}
Suppose that $A$ is $G$-graded by $\{A_{g}:g \in G\}$.  An ideal $I$ in $A$ is a \emph{graded ideal} if $\{ I \cap A_{g}:g \in G \}$ is a grading of $I$. Every ideal $I$ which is generated by a set $S$ of homogeneous elements is graded.
The following theorem allows us to put a $\mathbb{Z}^{k}$ grading on $KP_{R}(\Lambda)$ as well as giving us a full description of the algebra.

\begin{theorem}
\cite[Theorem 3.7]{ClarkFlynnHuef}\label{8}. Let $ \Lambda$ be a locally-convex, row-finite $k$-graph.
\begin{enumerate}
\item[(i)] There is a unique $R$-algebra $KP_{R}(\Lambda)$, generated by a Kumjian–Pask $\Lambda$-family $(p,s)$, such that if $(Q,T)$ is a Kumjian–Pask $\Lambda$-family in an $R$-algebra $A$, then there exists a unique $R$-algebra homomorphism $\pi_{Q,T} : KP_{R}(\Lambda) \mapsto A $ such that $\pi_{Q,T} \circ p = Q$ and $\pi_{Q,T} \circ s = T$. For every $r\in R \setminus \{ 0 \} $ and $v \in \Lambda^{0}$, we have $rp_{v} \neq 0$. 
\item[(ii)] The subsets $KP_{R}(\Lambda)_{n} := span \{ s_{\alpha}s_{\beta^{*}} : d(\alpha)-d(\beta)= n \}$ form a $\mathbb{Z}^{k}$-grading of $KP_{R}(\Lambda)$.
\end{enumerate}
\end{theorem}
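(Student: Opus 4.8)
The plan is to realise $KP_R(\Lambda)$ as a quotient of a free algebra, read off the universal property for free, transport a grading from the free algebra, and only then do real work to establish the nondegeneracy statement $rp_v\neq 0$.

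First I would let $X=\Lambda^0\cup\Lambda^{\neq 0}\cup G(\Lambda^{\neq 0})$ be a set of formal generators and form the free associative $R$-algebra $F_R(X)$ (the $R$-module of $R$-linear combinations of nonempty words in $X$, with concatenation as product). Let $I$ be the two-sided ideal of $F_R(X)$ generated by the elements obtained by moving every term of (KP1)--(KP4) to one side, and set $KP_R(\Lambda):=F_R(X)/I$ with $(p,s)$ the images of the generators; by construction $(p,s)$ is a Kumjian--Pask $\Lambda$-family. The universal property is then automatic: given a family $(Q,T)$ in an $R$-algebra $A$, sending each generator to its counterpart extends uniquely to an $R$-algebra homomorphism $F_R(X)\to A$, and since $(Q,T)$ satisfies (KP1)--(KP4) this map annihilates $I$, hence descends to a unique $\pi_{Q,T}\colon KP_R(\Lambda)\to A$ with $\pi_{Q,T}\circ p=Q$ and $\pi_{Q,T}\circ s=T$. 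Uniqueness holds because the generators generate.

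For (ii) I would grade $F_R(X)$ over $\mathbb{Z}^k$ by declaring $\deg(p_v)=0$, $\deg(s_\lambda)=d(\lambda)$ and $\deg(s_{\mu^*})=-d(\mu)$, extended multiplicatively. The key observation is that each defining relation equates, or sets to zero, homogeneous elements of a single degree: both sides of (KP2) have degree $d(\lambda)+d(\mu)$; each summand $s_\lambda s_{\lambda^*}$ in (KP4) has degree $0$, matching $p_v$; and in (KP3) the element $s_{\lambda^*}s_\mu$ is homogeneous while the right-hand side is either $p_{s(\lambda)}$ of degree $0$ (when $\lambda=\mu$) or zero. Thus $I$ is generated by homogeneous elements, so it is a graded ideal and the quotient inherits a $\mathbb{Z}^k$-grading in which $KP_R(\Lambda)_n$ is the image of the degree-$n$ part of $F_R(X)$. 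To identify this image with $\operatorname{span}\{s_\alpha s_{\beta^*}:d(\alpha)-d(\beta)=n\}$ I would prove the normal form: using (KP2) to concatenate and Theorem \ref{7} to rewrite each interior product $s_{\beta^*}s_\gamma$ as a finite $R$-combination of terms $s_{\alpha'}s_{\delta'^*}$, an induction on word length collapses any monomial into the span of the $s_\alpha s_{\beta^*}$, preserving degree throughout. This yields both $KP_R(\Lambda)=\sum_n KP_R(\Lambda)_n$ and $KP_R(\Lambda)_mKP_R(\Lambda)_n\subseteq KP_R(\Lambda)_{m+n}$, while directness of the sum is inherited directly from the grading of $F_R(X)$.

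The genuinely hard step is the nondegeneracy claim $rp_v\neq 0$ for $r\in R\setminus\{0\}$: everything above is formal, but a priori $I$ could be large enough to annihilate the $p_v$. To rule this out I would construct an explicit Kumjian--Pask family on the boundary path space. Let $M$ be the free $R$-module with basis $\{\delta_x:x\in\Lambda^{\leq\infty}\}$ and define $R$-linear operators by $P_v\delta_x=\delta_x$ when $r(x)=v$, $S_\lambda\delta_x=\delta_{\lambda x}$ when $s(\lambda)=r(x)$, and $S_{\mu^*}\delta_x=\delta_{x''}$ when $x$ factorises as $x=\mu x''$ for the necessarily unique boundary path $x''$, each extended by $0$ on the remaining basis vectors. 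Verifying (KP1)--(KP4) for $(P,S)$ on $M$ is where local convexity and row-finiteness enter: (KP4) requires that $v\Lambda^{\leq n}$ is finite and that, for each $x$ with $r(x)=v$, exactly one $\lambda\in v\Lambda^{\leq n}$ is an initial segment of $x$, which is precisely the boundary-path condition together with the factorization property, while (KP3) uses that distinct $\lambda,\mu\in\Lambda^{\leq n}$ admit no common extension. Granting this, the universal property supplies $\pi_{P,S}\colon KP_R(\Lambda)\to\operatorname{End}_R(M)$, and since $v\Lambda^{\leq\infty}\neq\emptyset$ we may choose $x$ with $r(x)=v$ and compute $\pi_{P,S}(rp_v)\delta_x=r\delta_x\neq 0$, as $\delta_x$ is a basis vector of the free module $M$ and $r\neq 0$. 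Hence $rp_v\neq 0$. I expect the careful bookkeeping in this boundary-path representation---in particular confirming (KP4) in the presence of sources---to be the main obstacle.
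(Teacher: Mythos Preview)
The paper does not prove this theorem; it is quoted as background from \cite[Theorem~3.7]{ClarkFlynnHuef}, so there is no proof in the paper to compare against. Your sketch is, however, precisely the strategy used in the cited source (and in the no-sources predecessor \cite{Kumjian Pask Algebra}): realise $KP_R(\Lambda)$ as a quotient of the free $R$-algebra on $\Lambda^0\cup\Lambda^{\neq 0}\cup G(\Lambda^{\neq 0})$ by the ideal of relations, observe that each relation is homogeneous for the $\mathbb{Z}^k$-grading $\deg(s_\lambda)=d(\lambda)$, $\deg(s_{\mu^*})=-d(\mu)$, and then use the boundary-path representation on the free module with basis $\Lambda^{\leq\infty}$ to establish $rp_v\neq 0$. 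Your identification of (KP4) on the boundary-path module as the substantive verification is accurate; in the locally-convex setting one needs that for every $x\in v\Lambda^{\leq\infty}$ and every $n$ there is a \emph{unique} $\lambda\in v\Lambda^{\leq n}$ which is an initial segment of $x$, and this is exactly what the definition of boundary path, together with local convexity, is designed to ensure.

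One small point worth tightening: your use of Theorem~\ref{7} to reduce arbitrary words to the normal form $\sum s_\alpha s_{\beta^*}$ is circular as stated, since Theorem~\ref{7} is a consequence of (KP1)--(KP4) and is typically proved \emph{after} one knows the algebra is nontrivial. In practice this is harmless---the computation in Theorem~\ref{7} is purely formal in any Kumjian--Pask family---but in a self-contained write-up you should derive that rewriting rule directly from the relations rather than invoke the theorem by number.
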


By putting together Theorems \ref{7} and \ref{8} it can be seen that $KP_{R}(\Lambda) = span_{R} \{s_{\lambda}s_{\mu^{*}}: s(\lambda) = s(\mu)  \}$. 

\begin{definition}
Let $R$ be a ring. Let $\Lambda$ be a $k$-graph. Let $I$ be an ideal of the Kumjian-Pask Algebra $KP_{R}(\Lambda)$. We say that $I$ is \emph{basic} if it has the property such that if $rp_{v} \in I$ and $r \in R \setminus \{0\}$ then $p_{v} \in I$.
\end{definition}
\begin{remark}
We note that if $R$ is a field, all ideals  of $KP_{R}(\Lambda)$ are basic.
Indeed, let $I$ be an ideal. Suppose that $rp_{v} \in I$ and $r \neq 0$. Then \[rp_{v}r^{-1}p_{v} = rr^{-1}p_{v}p_{v}= p_{v} \in I.\]

\end{remark}
\begin{definition} 
For a subset $H$ of $\Lambda^{0}$ define \[ I(H) := span_{R}\{s_{\alpha}s_{\beta ^{*}} : s(\alpha) = s(\beta) \in H \} .
\]
 
\end{definition}
$I(H)$ will be an ideal if $H$ is saturated and hereditary.

In order to have a clear picture of the lattice isomorphism introduced in Definition \ref{1} we give notations that will be concise to use throughout.
\begin{definition}
For an ideal $I$ in $KP_{R}(\Lambda)$ we define 
$H(I) : = \{ v\in \Lambda ^{0} :p_{v} \in I \}$
\end{definition}

\begin{lemma}
\cite[Lemma 9.2]{ClarkFlynnHuef}. 
Let $H$ be a hereditary, saturated subset of $\Lambda^{0}$, and $I(H)$ be the ideal of $KP_{R}(\Lambda)$ generated by $ \{ p_{v}:v \in H \}$. Then \[ I(H)=span \{ s_{\alpha}s_{\beta ^{*}}: \alpha, \beta \in \Lambda,s(\alpha)=s(\beta)\in H\}. \]
\end{lemma}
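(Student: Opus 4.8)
The plan is to prove the equality by two inclusions. Write $J := \mathrm{span}_R\{s_\alpha s_{\beta^*} : \alpha,\beta \in \Lambda,\ s(\alpha) = s(\beta) \in H\}$ for the right-hand side and let $I$ denote the ideal generated by $\{p_v : v \in H\}$. The easy inclusion is $J \subseteq I$: for a spanning element $s_\alpha s_{\beta^*}$ with $s(\alpha) = s(\beta) = v \in H$, property (KP2) lets me factor $s_\alpha s_{\beta^*} = s_\alpha p_v s_{\beta^*}$, and since $p_v \in I$ and $I$ is a two-sided ideal, this product lies in $I$. Taking the $R$-span gives $J \subseteq I$.

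For the reverse inclusion, since $I$ is by definition the smallest ideal containing the $p_v$ with $v \in H$, it suffices to verify that $J$ is itself a two-sided ideal containing each such $p_v$. The containment of generators is immediate: identifying $v$ with $v^*$ and $s_v$ with $p_v$, we have $p_v = s_v s_{v^*} \in J$. To see that $J$ is an ideal I use the description $KP_R(\Lambda) = \mathrm{span}_R\{s_\lambda s_{\mu^*} : s(\lambda) = s(\mu)\}$ noted after Theorem \ref{8}, so it is enough to show $s_\lambda s_{\mu^*}\cdot s_\alpha s_{\beta^*}$ and $s_\alpha s_{\beta^*}\cdot s_\lambda s_{\mu^*}$ lie in $J$ whenever $s(\alpha) = s(\beta) \in H$.

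The central computation rests on Theorem \ref{7}. If $r(\mu) \neq r(\alpha)$ the middle product $s_{\mu^*}s_\alpha$ vanishes by orthogonality of the $p_v$ (KP1); otherwise Theorem \ref{7} rewrites $s_{\mu^*}s_\alpha = \sum_{\mu\eta = \alpha\zeta,\ \mu\eta \in \Lambda^{\leq n}} s_\eta s_{\zeta^*}$, so that $s_\lambda s_{\mu^*}s_\alpha s_{\beta^*} = \sum s_{\lambda\eta}s_{(\beta\zeta)^*}$ by (KP2). The factorization relation $\mu\eta = \alpha\zeta$ forces $s(\eta) = s(\zeta)$, hence $s(\lambda\eta) = s(\beta\zeta)$; and since $r(\zeta) = s(\alpha) \in H$ with $H$ hereditary, we get $s(\zeta) = s(\eta) \in H$, so each term lies in $J$. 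A symmetric computation, expanding $s_{\beta^*}s_\lambda$ by Theorem \ref{7} and using hereditariness of $H$ via $r(\eta) = s(\beta) \in H$, handles right multiplication. This shows $J$ is a two-sided ideal containing the generators, yielding $I \subseteq J$ and hence equality. The main obstacle is this multiplication-closure step: applying the factorization formula of Theorem \ref{7} correctly and tracking the range and source vertices through the composed paths so that the hereditary property can be invoked; note that only hereditariness, and not saturation, is actually used.
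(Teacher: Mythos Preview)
Your argument is correct and is the standard one: show the span is contained in the generated ideal via $s_\alpha s_{\beta^*}=s_\alpha p_{s(\alpha)}s_{\beta^*}$, then show the span is itself a two-sided ideal by expanding the middle product with the formula of Theorem~\ref{7} and invoking hereditariness of $H$ to keep the sources of the new paths in $H$. Your observation that saturation is not used is also correct.

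There is nothing to compare against here, however: the paper does not supply its own proof of this lemma but simply quotes it as \cite[Lemma~9.2]{ClarkFlynnHuef}. The argument you wrote is essentially the proof given in that reference (and, in the no-sources case, in \cite[Lemma~5.4]{Kumjian Pask Algebra}), so your approach matches the cited source rather than anything in the present paper.
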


\begin{theorem}\label{1}
\cite[Theorem 9.4]{ClarkFlynnHuef}
Let $\Lambda$ be a row-finite, locally-convex $k$-graph. Let $R$ be a commutative ring with 1. Then the map $H \rightarrow I(H)$ is a lattice isomorphism from the set of saturated hereditary subsets of $ \Lambda^{0}$ onto the lattice of basic graded ideals of $KP_{R}(\Lambda)$.
\end{theorem}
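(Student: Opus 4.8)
The plan is to exhibit $H \mapsto I(H)$ as an order-preserving bijection whose two-sided inverse is $I \mapsto H(I)$; since a bijection of posets that is monotone in both directions automatically preserves meets and joins, this upgrades at once to the asserted lattice isomorphism. Two facts must be checked for well-definedness. Gradedness of $I(H)$ is immediate: by the spanning description $I(H) = \mathrm{span}\{s_\alpha s_{\beta^*} : s(\alpha)=s(\beta)\in H\}$ each generator is homogeneous in the $\mathbb{Z}^k$-grading of Theorem \ref{8}(ii), so $\{I(H)\cap KP_R(\Lambda)_n\}_{n}$ grades $I(H)$. That $I(H)$ is basic, together with injectivity, I would extract from the quotient-graph computation described next.

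The heart of the argument is the identification of $KP_R(\Lambda)/I(H)$. I would form the quotient $k$-graph $\Gamma := \Lambda\setminus\Lambda H$ on the vertices $\Lambda^0\setminus H$ (first verifying, using saturation and heredity, that $\Gamma$ is again a row-finite, locally-convex $k$-graph, which is the most delicate bookkeeping because of how $\leq n$-paths must be truncated near $H$). The images $\{p_v + I(H),\, s_\lambda + I(H)\}$ for $v,\lambda$ in $\Gamma$ form a Kumjian–Pask $\Gamma$-family, so the universal property gives a surjection $KP_R(\Gamma)\to KP_R(\Lambda)/I(H)$; a graded-uniqueness argument (the images of the vertex idempotents are nonzero and the map is graded) makes it an isomorphism. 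Reading off this isomorphism, $p_v + I(H)$ corresponds to $p_v\in KP_R(\Gamma)$, which is nonzero for every $v\notin H$ and satisfies $rp_v\neq 0$ for $r\neq 0$ by Theorem \ref{8}(i). Hence $p_v\notin I(H)$ and $rp_v\notin I(H)$ for $v\notin H$; the first statement gives $H(I(H))=H$ (the reverse inclusion $H\subseteq H(I(H))$ being trivial) and hence injectivity, while the second gives that $I(H)$ is basic.

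For surjectivity I would start from an arbitrary basic graded ideal $I$ and set $H:=H(I)$. Heredity and saturation of $H$ come straight from the relations: if $r(\lambda)\in H$ then $p_{s(\lambda)}=s_{\lambda^*}p_{r(\lambda)}s_\lambda\in I$, and if $s(v\Lambda^{\leq e_i})\subseteq H$ then (KP4) at $n=e_i$ writes $p_v=\sum_{\lambda\in v\Lambda^{\leq e_i}}s_\lambda p_{s(\lambda)} s_{\lambda^*}\in I$. Thus $H$ is saturated hereditary and $I(H)\subseteq I$. To get equality, pass to $KP_R(\Lambda)/I(H)\cong KP_R(\Gamma)$ and let $\bar I$ be the image of $I$; it is a basic graded ideal of $KP_R(\Gamma)$ containing no vertex idempotent, since $p_v\notin I$ for all $v\notin H$ and, by basicness of $I$, $rp_v\notin I$ for $r\neq 0$. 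The quotient map $KP_R(\Gamma)\to KP_R(\Gamma)/\bar I$ is then graded and nonzero on every $rp_v$, so the graded-uniqueness theorem forces it to be injective, whence $\bar I = 0$ and $I=I(H)$.

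Finally, monotonicity is routine in both directions: $H_1\subseteq H_2$ plainly gives $I(H_1)\subseteq I(H_2)$, and conversely applying $H(\cdot)$ to $I(H_1)\subseteq I(H_2)$ together with $H(I(H_j))=H_j$ yields $H_1\subseteq H_2$. The main obstacle I anticipate is the quotient-graph step: both the verification that $\Lambda\setminus\Lambda H$ remains locally convex and row-finite, and the graded-uniqueness input that upgrades the universal surjection to an isomorphism (and that kills $\bar I$), are where the real content and the reliance on local convexity concentrate; everything else reduces to the Kumjian–Pask relations and the grading.
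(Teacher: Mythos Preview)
The paper does not supply its own proof of this statement: Theorem~\ref{1} is quoted from \cite[Theorem~9.4]{ClarkFlynnHuef} as background, with no argument given. Your overall strategy---exhibit $H\mapsto I(H)$ and $I\mapsto H(I)$ as mutually inverse order-preserving maps, with the quotient-graph identification $KP_R(\Lambda)/I(H)\cong KP_R(\Lambda\setminus H)$ and graded uniqueness doing the work---is the standard one and is essentially what the paper later carries out (under an extra hypothesis) in Proposition~\ref{100}.

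There is, however, a circularity in your execution. To apply graded uniqueness to the universal map $\pi:KP_R(\Gamma)\to KP_R(\Lambda)/I(H)$ you must already know that $\pi(rq_v)=rp_v+I(H)\neq 0$ for every $v\notin H$ and $r\in R\setminus\{0\}$; that is precisely the statement that $I(H)$ is basic with $H(I(H))\subseteq H$. But you then \emph{deduce} ``$rp_v\notin I(H)$ for $v\notin H$'' from the resulting isomorphism. (Your parenthetical ``the images of the vertex idempotents are nonzero'' is also weaker than what graded uniqueness actually requires, which involves $rp_v$, not just $p_v$.) Note that Proposition~\ref{100} sidesteps this by \emph{assuming} the ideal is basic; you are trying to prove basicness, so you need an independent argument. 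The cleanest fix is to run the universal property in the other direction first: check that setting $Q_v=q_v$, $T_\lambda=t_\lambda$ when $s(\lambda)\notin H$ and zero otherwise defines a Kumjian--Pask $\Lambda$-family in $KP_R(\Gamma)$ (the key equality $(\Lambda\setminus H)^{\leq n}=\{\lambda\in\Lambda^{\leq n}:s(\lambda)\notin H\}$ is exactly what is verified in the proof of Proposition~\ref{100}), obtain $\phi:KP_R(\Lambda)\to KP_R(\Gamma)$ with $I(H)\subseteq\ker\phi$, and read off $rp_v\notin I(H)$ from $\phi(rp_v)=rq_v\neq 0$. After that, your graded-uniqueness step and the surjectivity argument go through as written.
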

\begin{remark}
The inverse of the lattice isomorphism described in the above theorem is the map $I \mapsto H(I)$.
\end{remark}

We now show that there is an isomorphism between the quotient algebra created by quotienting by a basic, graded ideal and the Kumjian-Pask algebra of the quotient graph. For $I$ an ideal we define $\Lambda \setminus H(I) $ to be the small category with objects $\Lambda^{0} \setminus H(I)$, and morphisms $ \{ \lambda \in \Lambda : r(\lambda) \text{ and } s(\lambda) \in \Lambda^{0} \setminus H(I) \}$, with the factorization property $d$ inherited from $(\Lambda, d)$.

\begin{proposition}\label{100}
Let $\Lambda$ be a locally-convex row-finite $k$-graph and $R$ a commutative ring with 1. Let $I$ be a basic graded ideal of $KP_{R}(\Lambda)$, and let $(q,t)$ and $(p,m)$ be the universal Kumjian-Pask families in $KP_{R}(\Lambda \setminus H(I))$ and $KP_{R}(\Lambda)$, respectively. Then there exists an isomorphism $\pi : KP_{R}(\Lambda \setminus H(I)) \mapsto  KP_{R}(\Lambda)/I$ such that \[ \pi(q_{v}) = p_{v} + I, \pi(t_{\lambda}) = m_{\lambda} + I, \text{ and } \pi(t_{\mu^{*}}) = m_{\mu^{*}} + I    \] for $v \in \Lambda^{0}/H(I) $ and $\lambda , \mu \in s^{-1}(\Lambda^{0}/H(I))$.
\end{proposition}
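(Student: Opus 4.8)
The plan is to construct $\pi$ from the universal property of $KP_R(\Lambda\setminus H(I))$ and then recognize it as an isomorphism using basicness of $I$ together with a graded uniqueness theorem. Write $H:=H(I)$. First I would exhibit a Kumjian--Pask $(\Lambda\setminus H)$-family in $KP_R(\Lambda)/I$. The natural candidate is $(Q,T)$ given by $Q_v:=p_v+I$ for $v\in\Lambda^0\setminus H$ and $T_\lambda:=m_\lambda+I$, $T_{\mu^*}:=m_{\mu^*}+I$ for morphisms of $\Lambda\setminus H$ (equivalently, by heredity of $H$, for paths with $s(\lambda),s(\mu)\notin H$, since $s(\lambda)\notin H$ forces every vertex of $\lambda$ out of $H$). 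Since $I$ is graded, $KP_R(\Lambda)/I$ inherits the $\mathbb{Z}^k$-grading of Theorem \ref{8}, and $(Q,T)$ is compatible with it.

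Next I would verify that $(Q,T)$ satisfies (KP1)--(KP4). Relations (KP1) and (KP2) descend immediately from the corresponding relations for $(p,m)$ by passing to the quotient. The crux is (KP4) (and the index set of (KP3)), where one must compare the receiving sets $v(\Lambda\setminus H)^{\leq n}$ and $v\Lambda^{\leq n}$. I would isolate the key combinatorial fact as a lemma: for $v\notin H$,
\[ v(\Lambda\setminus H)^{\leq n} = \{\lambda \in v\Lambda^{\leq n} : s(\lambda)\notin H\}. \]
The inclusion $\supseteq$ is immediate because $s(\lambda)\Lambda^{e_i}=\emptyset$ forces $s(\lambda)(\Lambda\setminus H)^{e_i}=\emptyset$. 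For $\subseteq$, suppose $\lambda\in v(\Lambda\setminus H)^{\leq n}$ but $\lambda\notin\Lambda^{\leq n}$; then some color $i$ has $d(\lambda)_i<n_i$ while $s(\lambda)\Lambda^{e_i}\neq\emptyset$, and maximality of $\lambda$ in the quotient gives $s(\lambda)(\Lambda\setminus H)^{e_i}=\emptyset$, so every color-$i$ edge out of $s(\lambda)$ lands in $H$, i.e. $s\big(s(\lambda)\Lambda^{\leq e_i}\big)\subseteq H$; saturation of $H$ then forces $s(\lambda)\in H$, contradicting $\lambda\in\Lambda\setminus H$.

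Granting this identity, every $\lambda\in v\Lambda^{\leq n}$ with $s(\lambda)\in H$ satisfies $m_\lambda m_{\lambda^*} = m_\lambda p_{s(\lambda)} m_{\lambda^*}\in I$, so reducing the (KP4) relation for $(p,m)$ modulo $I$ yields exactly (KP4) for $(Q,T)$; and since $(\Lambda\setminus H)^{\leq n}\subseteq\Lambda^{\leq n}$, relation (KP3) for $(Q,T)$ likewise follows by reducing the one for $(p,m)$ modulo $I$. The universal property of $KP_R(\Lambda\setminus H)$ then produces a graded $R$-algebra homomorphism $\pi:KP_R(\Lambda\setminus H)\to KP_R(\Lambda)/I$ satisfying the stated formulas. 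Surjectivity is clear: $KP_R(\Lambda)/I$ is spanned by the classes $m_\alpha m_{\beta^*}+I$ with $s(\alpha)=s(\beta)$, those with $s(\alpha)\in H$ vanish, and those with $s(\alpha)\notin H$ have all vertices outside $H$ by heredity, hence lie in the image of $\pi$. For injectivity I would invoke the graded uniqueness theorem for Kumjian--Pask algebras \cite{ClarkFlynnHuef}: $\pi$ is graded, and for $v\notin H$ and $r\in R\setminus\{0\}$ we have $\pi(rq_v)=rp_v+I\neq 0$, since basicness of $I$ would otherwise force $p_v\in I$, i.e. $v\in H$. Hence $\pi$ is injective and therefore an isomorphism.

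I expect the main obstacle to be the displayed set identity, as it is precisely the point where the definition of $\Lambda^{\leq n}$ interacts with deleting the vertices of $H$, and it is the one place where saturation of $H$ (rather than mere heredity) is essential. The remaining verifications are routine bookkeeping following the standard pattern for quotients of graph algebras.
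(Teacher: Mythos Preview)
Your proposal is correct and follows essentially the same route as the paper: both arguments hinge on the identity $v(\Lambda\setminus H)^{\leq n}=\{\lambda\in v\Lambda^{\leq n}:s(\lambda)\notin H\}$ (proved via saturation exactly as you outline), then obtain $\pi$ from the universal property, check surjectivity on spanning elements, and deduce injectivity from basicness of $I$ together with the graded uniqueness theorem of \cite{ClarkFlynnHuef}. The only item the paper mentions that you leave implicit is that $\Lambda\setminus H$ is again a locally-convex row-finite $k$-graph, which it cites from \cite[Theorem~5.2]{RaeburnSimsYeend}.
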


\begin{proof}
First note that $\Lambda \setminus H(I)$ is indeed a locally-convex $k$-graph. This is shown in the proof of \cite[Theorem 5.2]{RaeburnSimsYeend}.

Now we show that $ \{p_{v} + I, m_{\lambda} + I, m_{\mu^{*} + I} \} $ is a Kumjian-Pask $(\Lambda \setminus H(I))$ family. 
(KP 1) and (KP 2) hold as $(p, m)$ is a Kumjian-Pask $\Lambda$ family. To see (KP 3) and (KP 4) we show that $(\Lambda \setminus H(I)) ^{ \leq n} = \Lambda^{\leq n} \setminus \{ \lambda \in \Lambda : m_{\lambda} \in  I \}$. Take $n \in \mathbb{N} ^{k}$. Here we note that since $I$ is basic and graded it is generated by the set of idempotents of a hereditary and saturated set of vertices \cite[Theorem 9.4]{ClarkFlynnHuef}. Thus  $\{ \lambda \in \Lambda : m_{\lambda} \in  I \} = \{ \lambda \in \Lambda : s(\lambda) \in H(I) \}$.
To see that  $$(\Lambda \setminus H(I))^{\leq n} \subseteq \Lambda^{\leq n} \setminus \{ \lambda \in \Lambda : s(\lambda) \in H(I) \},$$ take $ \lambda \in (\Lambda \setminus H(I))^{\leq n} $.
Note that $s(\lambda) \notin H(I)$, so we need only show that $\lambda \in \Lambda ^{\leq n}$. 
Suppose that $d(\lambda) = n$. Then $ \lambda \in \Lambda ^{\leq n}. $
\par
Now suppose $d(\lambda) < n$. For notation, assume that $s(\lambda) = v$. Then for every $ i $ such that $d(\lambda)_{i} < n_{i}$, $ v (\Lambda \setminus H(I))^{e_{i}} = \emptyset.$ Thus we must show that for such an $i$, $v \Lambda^{e_{i}} = \emptyset$. To obtain a contradiction, suppose there exists an $i$ so that $ v \Lambda^{e_{i}} \neq \emptyset$. Then we have that $s( v \Lambda ^{e_{i}}) \subseteq H(I)$ is nonempty. Thus by definition of $ v (\Lambda)^{\leq n}$, we get $s( v \Lambda ^{e_{i}}) = s( v \Lambda ^{ \leq e_{i}})$. Which, since $H(I)$ is saturated, gives $v \in H(I) $ a contradiction. So it must be that $$(\Lambda \setminus H(I))^{\leq n} \subseteq \Lambda^{\leq n} \setminus \{ \lambda \in \Lambda : s(\Lambda) \in H(I) \}.$$

To see the other direction we simply note that by definition $(\Lambda \setminus H(I))^{ n} \subseteq \Lambda ^{n} $. So the inclusion is clear when $d(\lambda) = n$. When $d(\lambda) < n$, we note that since $(\Lambda \setminus H(I))^{e_{i}} \subseteq \Lambda^{e_{i}}$, it must be that $\lambda \in (\Lambda \setminus H(I))^{\leq n}.$
Now it is simple to show that $ \{p_{v} + I, m_{\lambda} + I, m_{\mu^{*}} + I \} $ satisfy (KP 3) and (KP 4). 
\par 
Thus by \cite[Theorem 3.7]{ClarkFlynnHuef} we know there is a homomorphism $\pi_{p + I, m + I}$ satisfying the equations stated. Since other generators of $KP_{R}(\Lambda)$ belong to $I$, the family $(p+I, m+ I)$ generates $KP_{R}(\Lambda) / I$ and $\pi$ is surjective. 
Suppose that $\pi (rq_{v}) = 0$ for some $r \in R \setminus \{ 0 \} $ and $ v \notin H(I)$. Then $ rp_{v} + I = \pi (rq_{v}) = 0$, so that $rp_{v} \in I$ and since $I$ is basic, $p_{v} \in I$ as well. And this implies that $ v \in H(I)$, a contradiction. Thus $ \pi (rq_{v}) \neq 0$ for all $r \in R \setminus \{ 0 \}. $ Since $I$ is graded, then $KP_{R}(\Lambda) / I$ is graded by $(KP_{R}(\Lambda) / I ) _{n} = q(KP_{R}(\Lambda)_{n}),$ where $q : KP_{R}(\Lambda) \mapsto KP_{R}(\Lambda) / I$ is the quotient map. 
 If $ \alpha, \beta \in (\Lambda \setminus H(I))$ with $d(\alpha) - d(\beta) = n \in \mathbb{Z}^{k}, $ then $\pi ( t_{\alpha}t_{\beta ^{*}}) = s_{\alpha}s_{\beta ^{*}} + I = q(s_{\alpha}s_{\beta^{*}}) \in q(KP_{R}(\Lambda ) _{n}) = (KP_{R}(\Lambda)/I)_{n}.$ Thus $\pi $ is graded and thus by the graded uniqueness theorem, \cite[Theorem 4.1]{ClarkFlynnHuef}, $\pi$ is injective.
\end{proof}

Our last two theorems will be helpful in classifying when a basic regular ideal must be graded.

\begin{theorem}\label{31}
Let $\Lambda$ be a row-finite, locally-convex $k$-graph. Let $J$ be a basic ideal in $KP_{R}(\Lambda)$, then $I(H(J))$ is the largest basic, graded ideal contained in $J$.
\end{theorem}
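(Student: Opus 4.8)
The plan is to reduce the statement to the lattice isomorphism of Theorem~\ref{1}, which identifies the basic graded ideals of $KP_R(\Lambda)$ precisely with the saturated hereditary subsets of $\Lambda^0$ via $H \mapsto I(H)$, with inverse $I \mapsto H(I)$. So the heart of the argument is to show that the vertex set $H(J) = \{v \in \Lambda^0 : p_v \in J\}$ is itself saturated and hereditary; once this is in hand, Theorem~\ref{1} immediately gives that $I(H(J))$ is a basic graded ideal. It then remains to prove two containments: that $I(H(J)) \subseteq J$, so that $I(H(J))$ is a basic graded ideal contained in $J$; and that every basic graded ideal contained in $J$ is contained in $I(H(J))$, so that it is the \emph{largest} such.

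First I would verify that $H(J)$ is hereditary and saturated, using only the Kumjian--Pask relations and the fact that $J$ is an ideal. For hereditariness, given $v \in H(J)$ and $\lambda \in \Lambda$ with $r(\lambda) = v$ (the case of $\lambda$ a vertex being trivial), I would take $n = d(\lambda) \neq 0$, so that $\lambda \in \Lambda^{\leq n}$ holds vacuously, and then (KP3) yields $s_{\lambda^*} s_\lambda = p_{s(\lambda)}$. Since $p_v s_\lambda = s_\lambda$ by (KP2), the element $s_{\lambda^*} p_v s_\lambda = p_{s(\lambda)}$ lies in the ideal $J$, whence $s(\lambda) \in H(J)$. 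For saturation, given $v$ with $s(v\Lambda^{\leq e_i}) \subseteq H(J)$ for some $i$, I would apply (KP4) with $n = e_i$ to write $p_v = \sum_{\lambda \in v\Lambda^{\leq e_i}} s_\lambda s_{\lambda^*}$; each summand equals $s_\lambda p_{s(\lambda)} s_{\lambda^*}$ with $p_{s(\lambda)} \in J$, so the whole sum lies in $J$, forcing $p_v \in J$ and hence $v \in H(J)$.

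With $H(J)$ saturated and hereditary, the containment $I(H(J)) \subseteq J$ follows from the spanning description $I(H(J)) = \mathrm{span}_R\{s_\alpha s_{\beta^*} : s(\alpha) = s(\beta) \in H(J)\}$: writing $w = s(\alpha) = s(\beta) \in H(J)$, each generator factors as $s_\alpha p_w s_{\beta^*}$, and $p_w \in J$ absorbs it into the ideal. For maximality, let $K$ be any basic graded ideal with $K \subseteq J$. By Theorem~\ref{1}, $K = I(H(K))$ with $H(K)$ saturated and hereditary; and $K \subseteq J$ gives $H(K) \subseteq H(J)$, since $p_v \in K$ implies $p_v \in J$. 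Monotonicity of $H \mapsto I(H)$, clear from the spanning sets, then yields $K = I(H(K)) \subseteq I(H(J))$, as required.

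The step requiring the most care is the verification that $H(J)$ is saturated and hereditary, since this is where the defining relations and the local-convexity hypothesis interact: one must apply (KP4) with the correct $\Lambda^{\leq e_i}$ in the presence of sources, and correctly invoke (KP3) on an appropriate $\Lambda^{\leq n}$. The remainder of the argument is essentially formal once the lattice isomorphism of Theorem~\ref{1} is available, so the proof is short modulo this bookkeeping.
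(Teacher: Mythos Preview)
Your proof is correct and follows essentially the same approach as the paper: show $H(J)$ is saturated and hereditary via the Kumjian--Pask relations, invoke Theorem~\ref{1} to conclude $I(H(J))$ is basic and graded, and then use the lattice isomorphism again for maximality. The paper's version is terser (it simply asserts that $H(J)$ is hereditary and saturated ``by (KP2--4)''), whereas you have spelled out those verifications carefully; the structure and ingredients are otherwise the same.
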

\begin{proof}
As $J$ is an ideal and $I(H(J))$ is the ideal generated by $ \{ p_{v} : p_{v} \in J \}$ it is clear that $I(H(J)) \subseteq J$. Further as $J$ is an ideal, $H(J)$ is hereditary and saturated by $(KP 2-4)$, thus $I(H(J))$ is basic and graded by Theorem \ref{1}. It remains to show that $I(H(J))$ is the largest. For this we note that by Theorem \ref{1} all basic, graded ideals are generated by the vertex idempotents of a saturated and hereditary set of vertices. As $I(H(J))$ contains all such idempotents in $J$, there can be no larger basic, graded ideal in $J$.  
\end{proof}

\begin{theorem}\label{33}
Let $\Lambda$ be a row-finite, locally-convex $k$-graph. Let $J$ be a basic ideal in $KP_{R}(\Lambda)$, if $  \Lambda \setminus H(J) $ satisfies Condition $(B)$ then $J$ is basic and graded.
\end{theorem}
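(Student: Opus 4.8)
The plan is to show that the basic ideal $J$ coincides with the basic graded ideal $I(H(J))$ produced by Theorem \ref{31}, from which gradedness of $J$ is immediate. Since $I(H(J)) \subseteq J$ always holds, the entire content is the reverse inclusion $J \subseteq I(H(J))$, and I would extract it by passing to the quotient by $I(H(J))$ and invoking the Cuntz--Krieger uniqueness theorem --- precisely the role Condition $(B)$ was designed to play.

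First I would set $I := I(H(J))$, which by Theorem \ref{31} is a basic, graded ideal contained in $J$, and apply Proposition \ref{100} to identify $KP_{R}(\Lambda)/I$ with $KP_{R}(\Lambda \setminus H(J))$ via the isomorphism $\pi$ sending $q_{v} \mapsto p_{v} + I$. Under this identification let $\bar J := \pi^{-1}(J/I)$ be the corresponding ideal of $KP_{R}(\Lambda \setminus H(J))$. Two routine checks carry the hypotheses across the quotient. For basicity of $\bar J$: if $r q_{v} \in \bar J$ with $r \neq 0$, then $r p_{v} + I \in J/I$, so $r p_{v} \in J$ (as $I \subseteq J$); since $J$ is basic, $p_{v} \in J$, hence $q_{v} = \pi^{-1}(p_{v} + I) \in \bar J$. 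For emptiness of $H(\bar J)$: if $q_{v} \in \bar J$ then $p_{v} \in J$, i.e. $v \in H(J)$, contradicting $v \in (\Lambda \setminus H(J))^{0}$; thus no vertex idempotent of $KP_{R}(\Lambda \setminus H(J))$ lies in $\bar J$.

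The key step is then the Cuntz--Krieger uniqueness theorem applied to the quotient map $\psi : KP_{R}(\Lambda \setminus H(J)) \to KP_{R}(\Lambda \setminus H(J))/\bar J$. Because $\bar J$ is basic with $H(\bar J) = \emptyset$, for every vertex $v$ of $\Lambda \setminus H(J)$ and every $r \in R \setminus \{0\}$ we have $r q_{v} \notin \bar J$, so $\psi(r q_{v}) \neq 0$. Since $\Lambda \setminus H(J)$ is a row-finite, locally-convex $k$-graph (established in the proof of Proposition \ref{100}) satisfying Condition $(B)$, the Cuntz--Krieger uniqueness theorem of \cite{ClarkFlynnHuef} forces $\psi$ to be injective, whence $\bar J = 0$. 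Translating back through $\pi$, this gives $J/I = 0$, i.e. $J = I = I(H(J))$, which is graded by Theorem \ref{1}; as $J$ is basic by hypothesis, $J$ is basic and graded.

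I expect the main obstacle to be bookkeeping rather than conceptual: the uniqueness theorem does the heavy lifting, so the care goes into verifying that $\bar J$ genuinely inherits basicity and has empty vertex set under $\pi$, and into confirming that all hypotheses of the Cuntz--Krieger theorem (row-finiteness, local-convexity, and Condition $(B)$ for the quotient graph) are in force. The one point worth stating cleanly is that $\pi$ carries the vertex idempotents $q_{v}$ to $p_{v} + I$, so that the vertex data of $\bar J$ is controlled exactly by membership of the $p_{v}$ in $J$; everything else then reduces to the definitions of basic ideal and of $H(\,\cdot\,)$.
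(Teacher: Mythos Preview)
Your proposal is correct and follows essentially the same route as the paper: quotient by $I(H(J))$, identify with $KP_{R}(\Lambda \setminus H(J))$ via Proposition \ref{100}, and apply the Cuntz--Krieger uniqueness theorem of \cite[Theorem 4.2]{ClarkFlynnHuef} to conclude that the image of $J$ in the quotient is trivial. Your version is in fact slightly more careful than the paper's, since you explicitly verify that $\bar J$ is basic (needed to ensure $\psi(r q_v) \neq 0$ for all nonzero $r$, which is the actual hypothesis of the uniqueness theorem), whereas the paper only checks $H(N) = \emptyset$ and tacitly assumes the stronger condition.
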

\begin{proof}
By Proposition \ref{100} we can identify $KP_{R}(\Lambda)/ I(H(J))$ with $KP_{R}(\Lambda \setminus H(J))$. We have that $I(H(J)) \subseteq J$. Let $N$ be the image of $J$ under the quotient map $ KP_{R}(\Lambda) \mapsto KP_{R}(\Lambda) / I(H(J)) \cong KP_{R}(\Lambda \setminus H(J))$. Then \[ KP_{R}(\Lambda) / J \cong KP_{R}(\Lambda \setminus H(J)) / N .\] 
Consider our quotient mapping $q:KP_{R}(\Lambda \setminus H(J)) \mapsto KP_{R}(\Lambda \setminus H(J)) / N.$ Note that $H(N) \subseteq (\Lambda \setminus H(J))^{0}$ is empty. Thus $q(p_{v}) \neq 0$ for all $ v \in \Lambda \setminus H(J)^{0}$. Since $\Lambda \setminus H(J)$ satisfies Condition $(B)$, we have by \cite[Theorem 4.2]{ClarkFlynnHuef} that the quotient map is injective. Hence $N$ is trivial and $J = I(H(J))$.\end{proof}

\section {Regular Ideals of Kumjian-Pask Algebras}
We start off this section by giving a series of observations which will help us to our goal of giving a vertex description of the basic, regular, graded ideals of $KP_{R}(\Lambda)$. 

\begin{definition}
An ideal $J$ in an algebra $A$ is called regular if $J^{\perp \perp} = J$ where $J^{\perp} = \{a \in A : ax = xa = 0$ $\forall$ $x \in J\}$.
\end{definition}
We note that if $J$ is an ideal then $J^{\perp}$ is a regular ideal.
The proof of the following lemma is largely the same as Lemma 3.2 of \cite{GoncalvesRoyer} but we include it here for completeness.
\begin{lemma}\label{101}
Let $\Lambda$ be a row-finite locally-convex $k$-graph. Let $R$ be a commutative ring with 1. If $J$ is a graded ideal of $KP_{R}(\Lambda)$ then $J^{\perp}$ is a graded ideal.
\end{lemma}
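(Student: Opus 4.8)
The goal is to show that if $J$ is a graded ideal of $KP_{R}(\Lambda)$, then $J^{\perp}$ is also graded. My plan is to verify graded-ness directly via the homogeneous-component criterion: I will take an arbitrary element $a \in J^{\perp}$, write its homogeneous decomposition $a = \sum_{n \in F} a_{n}$ with $a_{n} \in KP_{R}(\Lambda)_{n}$, and show that each homogeneous piece $a_{n}$ again lies in $J^{\perp}$. Since $J^{\perp}$ is always an ideal (noted in the excerpt), this suffices to conclude it is graded.

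First I would unwind the definition: $a \in J^{\perp}$ means $ax = xa = 0$ for every $x \in J$. The key leverage is that $J$ itself is graded, so every $x \in J$ decomposes as $x = \sum_{m} x_{m}$ with each homogeneous component $x_{m} \in J \cap KP_{R}(\Lambda)_{m}$ also lying in $J$. Therefore it is enough to test the condition $a y = y a = 0$ against homogeneous elements $y \in J \cap KP_{R}(\Lambda)_{m}$, and I may assume $y$ is homogeneous. Now consider a fixed homogeneous $y$ of degree $m$. The product $a y = \sum_{n \in F} a_{n} y$ is a sum in which the term $a_{n} y$ is homogeneous of degree $n + m$, since the $\mathbb{Z}^{k}$-grading satisfies $KP_{R}(\Lambda)_{n} KP_{R}(\Lambda)_{m} \subseteq KP_{R}(\Lambda)_{n+m}$. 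The distinct degrees $n + m$ for $n \in F$ are all distinct, so by the uniqueness of the homogeneous decomposition, $a y = 0$ forces every component $a_{n} y = 0$. The same argument applied to $y a$ gives $y a_{n} = 0$ for each $n$.

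Putting these together: for every homogeneous $y \in J$ and every $n \in F$ we get $a_{n} y = y a_{n} = 0$, and since a general element of $J$ is a finite $R$-linear combination of such homogeneous $y$ (using that $J$ is graded), it follows that $a_{n} x = x a_{n} = 0$ for all $x \in J$, i.e. $a_{n} \in J^{\perp}$. Hence each homogeneous component of an element of $J^{\perp}$ lies in $J^{\perp}$, which is exactly the statement that $J^{\perp}$ is a graded ideal.

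The argument is essentially formal, so I do not expect a genuine obstacle; the only point requiring care is making sure both the left and right annihilation conditions are handled symmetrically and that the homogeneity of $J$ (not merely of $J^{\perp}$) is invoked to reduce to homogeneous test elements $y$. The cleanest presentation is to prove the implication ``$a \in J^{\perp} \implies$ each $a_{n} \in J^{\perp}$'' as the single substantive step, relying on the fact already recorded in the excerpt that $J^{\perp}$ is an ideal so that no separate verification of the ideal axioms is needed.
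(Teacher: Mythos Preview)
Your proof is correct and follows essentially the same approach as the paper: take an element of $J^{\perp}$, reduce to testing against homogeneous elements of $J$ (using that $J$ is graded), and then use uniqueness of the homogeneous decomposition of the product to conclude each component lies in $J^{\perp}$. The only cosmetic difference is notation.
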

\begin{proof}
Let $z=\sum_{n\in \mathbb{Z}^{k}} r_{n}z_{n} \in J^{\perp}$. We need to prove that each $r_{n}z_{n} \in J^{\perp}$. Let $x \in J$. Since $J$ is graded then $x=\sum_{n\in \mathbb{Z}^{k}} t_{n} x_{n}$, where each $t_{n}x_{n}$ is homogeneous and $t_{n} x_{n} \in J$. So it is enough to check that $r_{n}z_{n}t_{i}x_{i}=t_{i}x_{i}r_{n}z_{n}= 0$ for each $i, n$.
Since $z\in J^{\perp}$, and $t_{i}x_{i} \in J$ for each fixed $i\in \mathbb{Z}^{k}$, we have that $\sum_{n\in \mathbb{Z}^{k}} t_{i}x_{i}r_{n}z_{n} = t_{i}x_{i}z= 0 =zt_{i}x_{i}= \sum_{n\in \mathbb{Z}^{k}}r_{n}z_{n}t_{i}x_{i}$, and hence the grading (and the fact that $t_{i}x_{i}$ is homogeneous) implies that $r_{n}z_{n}t_{i}x_{i} = t_{i}x_{i}r_{n}z_{n} = 0$ as desired.
\end{proof}

\begin{lemma}\label{40}
Let $\Lambda$ be a row-finite, locally-convex $k$-graph. Let $R$ be a commutative ring with 1. If $J$ is a basic graded ideal of $KP_{R}(\Lambda)$ then $J^{\perp}$ is a basic graded ideal.
\end{lemma}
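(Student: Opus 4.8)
The plan is to obtain the graded part of the conclusion essentially for free and then to concentrate all the work on the basic property. Indeed, by Lemma \ref{101} we already know that $J^{\perp}$ is a graded ideal whenever $J$ is graded, so the only thing left to verify is that $J^{\perp}$ is basic: if $rp_{v} \in J^{\perp}$ for some $r \in R \setminus \{0\}$ and $v \in \Lambda^{0}$, then $p_{v} \in J^{\perp}$.

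To set up the computation I would first invoke Theorem \ref{1} to write the basic, graded ideal $J$ as $I(H)$ for the saturated hereditary set $H = H(J)$, together with its explicit spanning description $J = \mathrm{span}_{R}\{ s_{\alpha}s_{\beta^{*}} : s(\alpha) = s(\beta) \in H \}$. Since $J^{\perp}$ is a two-sided ideal, testing membership of $p_{v}$ amounts to checking $p_{v}x = xp_{v} = 0$ on this spanning set. Using (KP2) and orthogonality of the $p_{w}$, one gets $p_{v}s_{\alpha}s_{\beta^{*}} = \delta_{v,r(\alpha)}s_{\alpha}s_{\beta^{*}}$ and $s_{\alpha}s_{\beta^{*}}p_{v} = \delta_{v,r(\beta)}s_{\alpha}s_{\beta^{*}}$, so both products vanish precisely when no $\alpha$ (resp. $\beta$) appearing in the spanning set has range $v$ — equivalently, when there is no path $\lambda$ with $r(\lambda) = v$ and $s(\lambda) \in H$. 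The crux is therefore to show that the hypothesis $rp_{v} \in J^{\perp}$ already forces this condition on $v$.

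I would argue by contradiction: suppose there is $\lambda \in \Lambda$ with $r(\lambda) = v$ and $s(\lambda) \in H$. Then $s_{\lambda}s_{\lambda^{*}} \in J$, and expanding $0 = (rp_{v})(s_{\lambda}s_{\lambda^{*}})$ with $p_{v}s_{\lambda} = s_{\lambda}$ gives $rs_{\lambda}s_{\lambda^{*}} = 0$. Right-multiplying by $s_{\lambda}$ and using $s_{\lambda^{*}}s_{\lambda} = p_{s(\lambda)}$ yields $rs_{\lambda} = 0$, and then left-multiplying by $s_{\lambda^{*}}$ gives $rp_{s(\lambda)} = 0$. This is where the main obstacle lies: because $R$ need not be a field, we cannot simply cancel $r$, and the whole argument hinges instead on the universal-property fact from Theorem \ref{8}(i) that $rp_{w} \neq 0$ for every $w \in \Lambda^{0}$ and every $r \neq 0$. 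That fact contradicts $rp_{s(\lambda)} = 0$, so no such $\lambda$ exists. Having ruled out any path from $v$ into $H$, we see $r(\alpha) \neq v$ and $r(\beta) \neq v$ for every spanning element, so both products vanish on each of them; by linearity $p_{v}x = xp_{v} = 0$ for all $x \in J$, giving $p_{v} \in J^{\perp}$ and hence that $J^{\perp}$ is basic.
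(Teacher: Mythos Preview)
Your proof is correct and follows essentially the same approach as the paper: both invoke Lemma~\ref{101} for gradedness, write $J = I(H)$ via Theorem~\ref{1}, and reduce the basic property to showing that $rp_{v} \in J^{\perp}$ forces $v \neq r(\alpha)$ and $v \neq r(\beta)$ for every spanning element $s_{\alpha}s_{\beta^{*}}$. The only difference is that you justify this last step explicitly via the universal-property fact $rp_{w} \neq 0$ from Theorem~\ref{8}(i), whereas the paper asserts the equivalence without spelling out that detail.
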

\begin{proof}
We have that $J^{\perp}$ is graded by Lemma \ref{101}. It remains to show that it is basic. As $J$ is a graded basic ideal it must be that $J = I(H) $ for some saturated hereditary set $H$ by Theorem \ref{1}. Suppose that $rp_{v} \in H(J^{\perp})$ then we have that $rp_{v}s_{\alpha}s_{\beta^{*}} = 0$ and $s_{\alpha}s_{\beta^{*}}rp_{v} = rs_{\alpha}s_{\beta^{*}}p_{v} = 0$ for all $s_{\alpha}s_{\beta^{*}} \in I(H)$. But this is true if and only if $v \neq r(\alpha) $ and $v \neq r(\beta)$ for all $s_{\alpha}s_{\beta^{*}} \in I(H)$. Thus we also get that $p_{v}s_{\alpha}s_{\beta^{*}} = s_{\alpha}s_{\beta^{*}}p_{v} = 0$ for all $s_{\alpha}s_{\beta^{*}} \in I(H)$. Thus $p_{v} \in J^{\perp}$.
\end{proof}
The following notation will be useful for the remainder of the paper as they reoccur. 
\begin{definition}
\begin{enumerate}
    \item[(i)] For $w \in \Lambda^{0}$, put $  T(w) = \{ s(\lambda) : \lambda \in \Lambda, r(\lambda) = w \} $ .
    \item[(ii)] If $ I \subseteq KP_{R}(\Lambda)$ an ideal, let  $\overline{H}(I) \subseteq \Lambda^{0}$ be the set \[ \overline{H}(I) = \{ r(\lambda) : \lambda \in \Lambda \text{ and } s(\lambda) \in H(I) \} .\]
\end{enumerate} 
\end{definition}

We are now ready to describe the vertex set of $J^{\perp}$ and in turn give a vertex description of $J^{\perp}$.
\begin{lemma}\label{2}

Let $\Lambda$ be a row finite, locally-convex $k$-graph. Let $H$ be a hereditary and saturated subset of $\Lambda^{0}$. Let $J$ be the ideal generated by $H$. Then $$ H(J^{\perp}) = \{v \in \Lambda^{0} : v \Lambda H = \emptyset\} = \Lambda^{0} \setminus \bar{H}(J).$$
\end{lemma}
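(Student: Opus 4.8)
The statement is a chain of two equalities, and my plan is to dispatch the right-hand one almost immediately and to concentrate the real work on the left-hand one. The key preliminary observation is that $H(J) = H$: since $J$ is the ideal generated by $\{p_v : v \in H\}$, \cite[Lemma 9.2]{ClarkFlynnHuef} gives $J = I(H)$, and Theorem \ref{1} (whose inverse is the map $I \mapsto H(I)$) yields $H(I(H)) = H$. Granting this, the right-hand equality is purely formal: by definition $\overline{H}(J) = \{r(\lambda) : \lambda \in \Lambda,\ s(\lambda) \in H\}$, so $v \in \overline{H}(J)$ precisely when some path with range $v$ has source in $H$, i.e. when $v\Lambda H \neq \emptyset$; taking complements gives $\Lambda^0 \setminus \overline{H}(J) = \{v : v\Lambda H = \emptyset\}$.

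For the left-hand equality I would first reduce membership $p_v \in J^\perp$ to a condition on the spanning elements of $J = I(H)$, namely the $s_\alpha s_{\beta^*}$ with $s(\alpha) = s(\beta) \in H$. Since every element of $J$ is an $R$-linear combination of these and multiplication is $R$-bilinear, $p_v \in J^\perp$ holds iff $p_v$ annihilates each such spanning element on both sides. Using (KP1) and (KP2) I compute $p_v s_\alpha s_{\beta^*} = \delta_{v,r(\alpha)} s_\alpha s_{\beta^*}$ and $s_\alpha s_{\beta^*} p_v = \delta_{v,r(\beta)} s_\alpha s_{\beta^*}$, so the condition becomes: $r(\alpha) \neq v$ and $r(\beta) \neq v$ for every nonzero spanning element $s_\alpha s_{\beta^*}$.

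The forward inclusion is then immediate: if $v\Lambda H = \emptyset$ there is no path with range $v$ and source in $H$, so necessarily $r(\alpha) \neq v$ and $r(\beta) \neq v$ for all the spanning elements, whence $p_v \in J^\perp$ and $v \in H(J^\perp)$. For the reverse inclusion I argue by contraposition: if $v\Lambda H \neq \emptyset$ I pick $\lambda \in v\Lambda H$, observe that $s_\lambda s_{\lambda^*} \in J$ because $s(\lambda) \in H$, and compute $p_v (s_\lambda s_{\lambda^*}) = s_\lambda s_{\lambda^*}$ since $r(\lambda) = v$. This exhibits a witness showing $p_v \notin J^\perp$, provided $s_\lambda s_{\lambda^*} \neq 0$.

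The one genuine obstacle is this nonvanishing, which I would settle as follows. The defining condition on $\Lambda^{\leq n}$ is vacuous when $d(\lambda) = n$, so $\lambda \in \Lambda^{\leq d(\lambda)}$, and (KP3) with $n = d(\lambda)$ gives $s_{\lambda^*} s_\lambda = p_{s(\lambda)}$ (the case $d(\lambda) = 0$, i.e. $v \in H$, is trivial since there $s_\lambda s_{\lambda^*} = p_v$). If $s_\lambda s_{\lambda^*}$ were zero, then $p_{s(\lambda)} = (s_{\lambda^*} s_\lambda)^2 = s_{\lambda^*}(s_\lambda s_{\lambda^*}) s_\lambda = 0$, contradicting $r p_v \neq 0$ from Theorem \ref{8}(i). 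This completes both inclusions and hence the identification of $H(J^\perp)$.
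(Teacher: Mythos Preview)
Your proof is correct and follows essentially the same approach as the paper: both reduce $p_v \in J^\perp$ to checking $p_v s_\alpha s_{\beta^*} = s_\alpha s_{\beta^*} p_v = 0$ on the spanning elements $s_\alpha s_{\beta^*}$ of $J = I(H)$, and both read off the condition $v \neq r(\alpha)$, $v \neq r(\beta)$ as meaning there is no path from $H$ to $v$. Your treatment is in fact more careful than the paper's, which tacitly assumes $s_\alpha s_{\beta^*} \neq 0$ when asserting ``for (i) to be true we require that $v \neq r(\alpha)$''; you make this step explicit by verifying $s_\lambda s_{\lambda^*} \neq 0$ via (KP3) and the nonvanishing of $p_{s(\lambda)}$.
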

\begin{proof}
Since we define $H(J^{\perp})$ to be $\{ v \in \Lambda^{0} : p_{v} \in J^{\perp} \}$,
  $$ H(J^{\perp}) = \{v \in \Lambda : p_{v}s_{\alpha}s^{*}_{\beta}= s_{\alpha}s^{*}_{\beta}p_v = 0 \text{ for all } s_{\alpha}s^{*}_{\beta} \in J \}$$
So $v \in H(J^{\perp})$ if and only if for all $s_{\alpha}s^{*}_{\beta} \in J$:
\begin{enumerate}
    \item[(i)] $p_{v}s_{\alpha}s_{\beta}^{*}=0$; and 
    \item[(ii)] $s_{\alpha}s^{*}_{\beta} p_{v}=0$.  
\end{enumerate}

 For (i) to be true we require that $v \neq r(\alpha)$. So we require that $ v \neq r(\alpha)$ for all $\alpha $ with $s_{\alpha}s_{\beta}^{*} \in J$. So for $v \in H(J^{\perp})$ there can be no path from a vertex in $H$ to $v$ (as $s_{\alpha} = s_{\alpha}p_{s(\alpha)} \in J)$.
\\For (ii) to be true we require that $v \neq r(\beta)$. So we require that $v \neq r(\beta)$ for all $\beta$ with $s_{\alpha}s_{\beta}^{*} \in J$. So for $v \in H(J^{\perp})$ there can be no path from a vertex in $H$ to $v$.

So we have the following description:
$$ H(J^{\perp}) = \{v \in \Lambda^{0} : \text{ there is no path from a vertex in } H \text{ to } v \}$$ as desired.
\end{proof}

In \cite{GoncalvesRoyer} a vertex set description was given for the regular ideals of row-finite, no source Leavitt Path Algebras. As Leavitt Path Algebras are isomorphic to Kumjian-Pask Algebras generated by a $1$-graph, the following result generalizes \cite{GoncalvesRoyer} both in moving to higher-rank and by allowing for sources.
\begin{theorem}\label{3}
Let $\Lambda$ be a locally-convex, row-finite $k$-graph. Let $R$ be a commutative ring with 1. Let $J \subseteq KP_{R}(\Lambda)$ be a basic graded ideal. Then: 
\begin{enumerate}
    \item[(i)] $J^{\perp} = I( {\Lambda^0 \setminus \bar{H} (J)});$
    \item[(ii)] $J^{\perp \perp} = I({ \{ w \in \Lambda ^{0} : T(w) \subseteq \bar{H}(J) \}  });$ and
    \item[(iii)] $J$ is regular if and only if $H(J) = \{ w \in \Lambda ^{0} : T(w) \subseteq \bar{H}(J) \}$
\end{enumerate}
\end{theorem}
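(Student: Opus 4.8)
The plan is to establish the three parts in order, using Lemma \ref{2} as the engine and then iterating it. For part (i), I would observe that $J^{\perp}$ is itself a basic graded ideal by Lemma \ref{40}, so by the lattice isomorphism of Theorem \ref{1} it must equal $I(H(J^{\perp}))$. Lemma \ref{2} already computes $H(J^{\perp}) = \Lambda^0 \setminus \bar{H}(J)$, so part (i) is essentially immediate once I confirm that $\Lambda^0 \setminus \bar{H}(J)$ is saturated and hereditary (which it must be, being of the form $H(\cdot)$ for an ideal). Thus $J^{\perp} = I(\Lambda^0 \setminus \bar{H}(J))$.

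For part (ii) the idea is to apply the same machinery to the ideal $J^{\perp}$ in place of $J$. Since $J^{\perp}$ is basic and graded, Lemma \ref{40} gives that $J^{\perp \perp} = (J^{\perp})^{\perp}$ is again basic and graded, hence equals $I(H(J^{\perp \perp}))$. Applying Lemma \ref{2} with $J$ replaced by $J^{\perp} = I(\Lambda^0 \setminus \bar{H}(J))$, whose associated hereditary saturated set is $H(J^{\perp}) = \Lambda^0 \setminus \bar{H}(J)$, I get
\[
H(J^{\perp \perp}) = \{ w \in \Lambda^0 : w\Lambda \, (\Lambda^0 \setminus \bar{H}(J)) = \emptyset \}.
\]
The content of the step is then to translate the condition ``$w\Lambda(\Lambda^0 \setminus \bar{H}(J)) = \emptyset$'' into ``$T(w) \subseteq \bar{H}(J)$''. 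This is just unwinding definitions: $w\Lambda(\Lambda^0 \setminus \bar{H}(J)) = \emptyset$ says no path out of $w$ has its source outside $\bar{H}(J)$, i.e.\ every vertex reachable from $w$ lies in $\bar{H}(J)$, which is exactly $T(w) \subseteq \bar{H}(J)$ (recalling $T(w) = \{ s(\lambda) : r(\lambda) = w \}$, with $w \in T(w)$ via the identity morphism). This gives part (ii).

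For part (iii), recall that $J$ is regular precisely when $J^{\perp \perp} = J$. Since both $J$ and $J^{\perp\perp}$ are basic graded ideals, the lattice isomorphism of Theorem \ref{1} (together with the inverse map $I \mapsto H(I)$) makes the equality $J = J^{\perp\perp}$ equivalent to the equality of their vertex sets, $H(J) = H(J^{\perp\perp})$. Substituting the formula from part (ii) yields $H(J) = \{ w \in \Lambda^0 : T(w) \subseteq \bar{H}(J) \}$, which is the stated criterion. The main obstacle I anticipate is not any single deep step but the bookkeeping in part (ii): I must verify carefully that the hereditary saturated set driving Lemma \ref{2} for the second application is genuinely $\Lambda^0 \setminus \bar{H}(J)$, and that the rephrasing of the emptiness condition as $T(w) \subseteq \bar{H}(J)$ correctly handles the trivial path at $w$ so that the set $\{w : T(w) \subseteq \bar{H}(J)\}$ is itself saturated and hereditary, keeping the whole argument inside the lattice where Theorem \ref{1} applies.
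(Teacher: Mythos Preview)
Your proposal is correct and follows essentially the same approach as the paper: invoke Lemma \ref{40} to see $J^{\perp}$ is basic and graded, apply Lemma \ref{2} to identify $H(J^{\perp})$, and then use the lattice isomorphism of Theorem \ref{1} to conclude (i), with (ii) and (iii) obtained by iterating. The paper's own proof simply states (i) in this way and then writes ``the rest follows,'' so your account of (ii) and (iii) is just a careful unpacking of that phrase.
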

\begin{proof}
We know that $J = I(H)$ for some saturated and hereditary set $H$ by Theorem \ref{1}. Thus from Lemma \ref{2} we know $H(J^{\perp}) = \Lambda^{0} \setminus \bar{H}(J)$. We also know that $J^{\perp}$ is basic and graded since $J$ is basic, and graded by Lemma \ref{40}. So it must be generated by $\{ p_{v} : v \in H(J^{\perp}) \}$. This proves (i). The rest follows.
\end{proof}
\begin{corollary}
Let $\Lambda$ be a locally-convex, row-finite $k$-graph. Let $R$ be a field. Let $J \subseteq KP_{R}(\Lambda)$ be a graded ideal. Then: 
\begin{enumerate}
    \item[(i)] $J^{\perp} = I({\Lambda^0 \setminus \bar{H} (J)});$
    \item[(ii)] $J^{\perp \perp} = I({ \{ w \in \Lambda ^{0} : T(w) \subseteq \bar{H}(J) \}  } );$ and
    \item[(iii)] $J$ is regular if and only if $H(J) = \{ w \in \Lambda ^{0} : T(w) \subseteq \bar{H}(J) \}$
\end{enumerate}
\end{corollary}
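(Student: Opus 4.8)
The plan is to reduce this corollary directly to Theorem \ref{3}, since the two statements differ only in their hypotheses: Theorem \ref{3} assumes $J$ is a \emph{basic} graded ideal, whereas here we assume instead that $R$ is a field and $J$ is graded. First I would invoke the Remark following the definition of a basic ideal, which records that when $R$ is a field every ideal of $KP_{R}(\Lambda)$ is automatically basic. Concretely, if $rp_{v} \in J$ with $r \neq 0$, then $r p_{v} r^{-1} p_{v} = r r^{-1} p_{v} p_{v} = p_{v} \in J$, so $p_{v} \in J$ as required. Note this argument uses only that $J$ is an ideal and makes no reference to the grading, so it applies unchanged to our graded $J$.

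Having established that $J$ is basic and graded, I would then simply apply Theorem \ref{3} to $J$: its three conclusions are exactly statements (i), (ii), and (iii) of the corollary, so they follow verbatim. In particular, (i) comes from $H(J^{\perp}) = \Lambda^{0} \setminus \bar{H}(J)$ via Lemma \ref{2} together with the fact that $J^{\perp}$ is basic and graded (Lemma \ref{40}), and (ii)--(iii) follow from the vertex-set computations already carried out in Theorem \ref{3}.

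There is no real obstacle here; the entire content is the observation that the field hypothesis supplies the ``basic'' condition for free through the Remark, after which the corollary is an immediate specialization of Theorem \ref{3}. The only point meriting a sentence of care is confirming that the Remark's basicness argument does not secretly require gradedness, which the displayed identity above settles.
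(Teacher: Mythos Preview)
Your proposal is correct and matches the paper's own proof essentially verbatim: the paper simply notes that all ideals are basic when $R$ is a field and says the rest follows from Theorem \ref{3}. Your additional remarks about Lemma \ref{2} and Lemma \ref{40} just unpack what ``the rest follows'' means, so there is nothing to add.
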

\begin{proof}
All ideals are basic as $R$ is a field. The rest follows from above.
\end{proof}

We now show that quotienting by a basic, graded, regular ideal of a Kumjian-Pask Algebra preserves Condition $(B)$. In was shown in \cite{GoncalvesRoyer} that Condition $(L)$ is preserved when quotienting by a basic, graded, regular ideal of a Leavitt Path Algebra.

\begin{theorem}\label{5}
Suppose that $\Lambda$ is a row-finite, locally-convex $k$-graph which satisfies Condition ($B$). 
If $J$ is a regular, basic, graded ideal, then $\Lambda \setminus J$ satisfies Condition $(B)$.

\end{theorem}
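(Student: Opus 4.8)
The plan is to fix a vertex $v$ of $\Lambda \setminus H(J)$ and manufacture a distinguishing boundary path for it out of one living deep inside the graph, at a vertex where the quotient does no damage. Write $\Gamma := \Lambda \setminus H(J)$, and recall from Theorem \ref{3} that, because $J$ is regular, $H(J) = \{ w \in \Lambda^{0} : T(w) \subseteq \overline{H}(J) \}$. Since $v \notin H(J)$, this means $T(v) \not\subseteq \overline{H}(J)$, so there is a path $\lambda \in \Lambda$ with $r(\lambda) = v$ and $w := s(\lambda) \notin \overline{H}(J)$. By Lemma \ref{2}, $w \in H(J^{\perp}) = \Lambda^{0} \setminus \overline{H}(J)$, and $H(J^{\perp})$ is saturated and hereditary. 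As $H(J) \subseteq \overline{H}(J)$ (every $u \in H(J)$ is $r(\mathrm{id}_{u})$ with source in $H(J)$), both endpoints of $\lambda$ lie outside $H(J)$, so $\lambda$ is a morphism of $\Gamma$.

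Next I would transport the distinguishing boundary path at $w$ into the quotient. Since $\Lambda$ satisfies Condition $(B)$, there is $y \in w\Lambda^{\leq \infty}$ with $\alpha \neq \beta \Rightarrow \alpha y \neq \beta y$. Because $H(J^{\perp})$ is hereditary and $w \in H(J^{\perp})$, every vertex visited by $y$ again lies in $H(J^{\perp})$; for each such vertex $u$, no edges are deleted in passing to $\Gamma$ (all of $u\Lambda$ has source in the hereditary set $H(J^{\perp}) \subseteq \Lambda^{0} \setminus H(J)$), so $u\Lambda^{\leq e_{i}} = u\Gamma^{\leq e_{i}}$ for every $i$. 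Consequently $y$ is equally a boundary path of the $k$-graph $\Gamma$, which is locally-convex by Proposition \ref{100}. Applying the concatenation fact for boundary paths inside $\Gamma$ to $\lambda \in \Gamma$ and $y$ then produces a boundary path $x := \lambda y \in v\Gamma^{\leq \infty}$, so I never have to verify the boundary-path property of the concatenation by hand.

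Finally I would check that $x$ distinguishes morphisms of $\Gamma$. Suppose $\alpha \neq \beta$ are morphisms of $\Gamma$ with $s(\alpha) = s(\beta) = v$ and $\alpha x = \beta x$; then $(\alpha\lambda) y = (\beta\lambda) y$ in $\Lambda$, where $\alpha\lambda, \beta\lambda \in \Lambda$ both terminate at $w = r(y)$. Since $y$ is distinguishing for $w$ in $\Lambda$, this forces $\alpha\lambda = \beta\lambda$; comparing degrees gives $d(\alpha) = d(\beta)$, whereupon the factorization property (uniqueness of the degree-$d(\alpha)$, degree-$d(\lambda)$ factorization of $\alpha\lambda = \beta\lambda$) yields $\alpha = \beta$, a contradiction. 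Hence $v$ satisfies Condition $(B)$ in $\Gamma$, and since $v$ was arbitrary, $\Gamma = \Lambda \setminus H(J)$ satisfies Condition $(B)$. The main obstacle I anticipate is the middle step's bookkeeping: one must confirm both that the image of $y$ never leaves $H(J^{\perp})$ and that the boundary-path condition is genuinely insensitive to the passage from $\Lambda$ to $\Gamma$ at those vertices. The regularity hypothesis, through Theorem \ref{3}, is precisely what guarantees the existence of such a $w$ reachable from $v$, and hence of such a $y$; this is where the failure for arbitrary quotients would occur.
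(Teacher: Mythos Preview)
Your argument is correct and follows essentially the same route as the paper: both proofs use regularity (via Theorem \ref{3} or equivalently Lemma \ref{2} applied to $J^{\perp}$) to see that every vertex of $\Lambda \setminus H(J)$ admits a path into the hereditary set $H(J^{\perp})$, then transport the distinguishing boundary path from there into the quotient and concatenate. Your write-up is in fact more careful than the paper's in verifying that the boundary-path condition is unchanged at vertices of $H(J^{\perp})$ and in using the factorization property to cancel $\lambda$ when checking the distinguishing property.
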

\begin{proof}
First note that using Lemma \ref{2} and replacing $J$ with $J^{\perp}$, and since $J$ is regular, basic and graded, we have that $(\Lambda/J)^{0} = \Lambda ^{0}\setminus H(J) = \overline{H}(J^{\perp})$. For a vertex $v \in H(J^{\perp})$ we know there exists an $x \in v \Lambda ^{\leq \infty}$ such that if $\alpha \neq \beta$ then $\alpha x \neq \beta x$. As $H(J^{\perp})$ is saturated and hereditary (since $J^{\perp}$ is an ideal) and $r(x) = v$ we know that $x(i, i) \in H(J^{\perp})$ for all $i$. Thus since $J ^{\perp}$ is an ideal we conclude that $x \in (\Lambda/J) ^{\leq \infty}$.
Hence all vertices in $H(J^{\perp}) $ satisfy Condition $(B)$. For a vertex $w$ in $\overline{H} (J ^{\perp})$ we know there exists a finite path $\gamma$ with $r(\gamma ) = w $ and $s(\gamma) \in H(J^{\perp})$. Therefore $\gamma x_{s(\gamma)} \in (\Lambda/J) ^{\leq \infty}$ satisfies Condition $(B)$ at $w$. To see this note that if $\alpha \neq \beta$ then  $\alpha \gamma \neq \beta \gamma$ and $x_{s(\gamma)}$ satisfies Condition $(B)$ at $s(\gamma)$.
\end{proof}

We remind the reader that for a row-finite $k$-graph with no sources that Condition $(B)$ is equivalent to the aperiodicity condition \cite[Lemma 8.4]{ClarkFlynnHuef}. The corollary follows immediately.

\begin{corollary}

Let $\Lambda$ be a row finite $k$-graph with no sources which is aperiodic. Let $J$ be a basic, graded regular ideal of $KP_{R}(\Lambda)$ then $\Lambda / J$ is aperiodic.
\end{corollary}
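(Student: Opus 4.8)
The plan is to reduce the statement entirely to Theorem \ref{5} together with the equivalence between aperiodicity and Condition $(B)$ recorded just above (cited as \cite[Lemma 8.4]{ClarkFlynnHuef}), so the only real content is checking that the hypotheses of each tool are met in both directions. First I would observe that $\Lambda$, being row-finite with no sources, is automatically locally-convex by \cite[Remark 3.10]{RaeburnSimsYeend}; this is what lets Theorem \ref{5} apply at all. Next, since $\Lambda$ is row-finite with no sources and aperiodic, the equivalence gives that $\Lambda$ satisfies Condition $(B)$. Theorem \ref{5} then applies to the regular, basic, graded ideal $J$ and yields that the quotient graph $\Lambda \setminus H(J)$ satisfies Condition $(B)$.

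To convert Condition $(B)$ on the quotient back into aperiodicity, I need the equivalence in the reverse direction, which requires the quotient to again be row-finite with no sources. Row-finiteness is immediate since $\Lambda \setminus H(J)$ is a subcategory of $\Lambda$ with a subset of its paths. The no-sources claim is the one point that deserves an argument, and I expect it to be the main (though still routine) obstacle. I would argue it using saturation of $H(J)$: because $\Lambda$ has no sources, $\Lambda^{\leq e_i} = \Lambda^{e_i}$, so for any vertex $v \notin H(J)$ and any $i$, saturation forces $s(v\Lambda^{e_i}) \not\subseteq H(J)$, i.e.\ there is at least one edge $\lambda \in v\Lambda^{e_i}$ with $s(\lambda) \notin H(J)$. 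Hence $v(\Lambda \setminus H(J))^{e_i} \neq \emptyset$ for every $i$, and $v$ is not a source in the quotient.

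With the quotient shown to be row-finite with no sources, I would close the argument by invoking \cite[Lemma 8.4]{ClarkFlynnHuef} once more: Condition $(B)$ on $\Lambda \setminus H(J)$ is equivalent to aperiodicity of $\Lambda \setminus H(J)$, which is exactly the assertion that $\Lambda / J$ is aperiodic. The only subtlety worth flagging is the notational identification of $\Lambda / J$ with the quotient graph $\Lambda \setminus H(J)$ used in Theorem \ref{5} and Proposition \ref{100}; once that identification is made explicit, every step above is a direct application of an already-established result, which is why the corollary may fairly be said to follow immediately.
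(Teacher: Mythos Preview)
Your proposal is correct and follows exactly the route the paper takes: the paper's own proof is the single remark that the corollary follows immediately from Theorem \ref{5} together with the Condition $(B)$/aperiodicity equivalence \cite[Lemma 8.4]{ClarkFlynnHuef}, and your write-up simply spells out the details behind that immediacy, in particular the saturation argument showing that $\Lambda \setminus H(J)$ again has no sources so that the equivalence applies in the reverse direction.
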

We finish the section with some theorems that allow us to show sufficient conditions for when a basic regular ideal must be graded.

\begin{lemma}\label{32}
Let $\Lambda$ be a locally-convex, row-finite $k$-graph. Let $J$ be a basic, regular ideal of $KP_{R}(\Lambda)$. Then $I(H(J)) \subseteq J$ is a regular basic, graded ideal.
\end{lemma}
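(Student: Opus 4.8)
Write $K := I(H(J))$ for brevity. The plan is to extract everything except regularity directly from Theorem \ref{31}, and then to prove regularity by a short lattice-theoretic argument combining the regularity of $J$ with the maximality property of $K$.

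First I would record what comes for free. By Theorem \ref{31}, $K = I(H(J))$ is the largest basic, graded ideal contained in $J$; in particular $K \subseteq J$ and $K$ is both basic and graded. Thus the only remaining claim is that $K$ is regular, i.e.\ that $K^{\perp\perp} = K$. The inclusion $K \subseteq K^{\perp\perp}$ is automatic: any $y \in K$ is annihilated on both sides by every element of $K^{\perp}$ by definition, so $y$ annihilates every element of $K^{\perp}$ and hence lies in $K^{\perp\perp}$. So I would reduce the task to establishing the reverse inclusion $K^{\perp\perp} \subseteq K$.

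For the reverse inclusion I would chain together two observations. Since $K \subseteq J$ and the annihilator operation reverses inclusions (fewer elements to annihilate gives a larger annihilator), we get $J^{\perp} \subseteq K^{\perp}$ and then $K^{\perp\perp} \subseteq J^{\perp\perp}$; because $J$ is regular this yields $K^{\perp\perp} \subseteq J^{\perp\perp} = J$. Next I would argue that $K^{\perp\perp}$ is itself basic and graded: $K$ is basic and graded, so Lemma \ref{40} shows $K^{\perp}$ is basic and graded, and applying Lemma \ref{40} once more to $K^{\perp}$ shows that $K^{\perp\perp} = (K^{\perp})^{\perp}$ is basic and graded. Now $K^{\perp\perp}$ is a basic, graded ideal contained in $J$, so the maximality statement of Theorem \ref{31} forces $K^{\perp\perp} \subseteq K = I(H(J))$. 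Together with the automatic inclusion this gives $K^{\perp\perp} = K$, proving $K$ is regular.

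The step I expect to be the crux is verifying that $K^{\perp\perp}$ is basic and graded, since this is precisely what licenses the use of Theorem \ref{31}'s maximality; this hinges on being able to apply Lemma \ref{40} twice, which requires $K^{\perp}$ to again be basic and graded rather than merely regular. The remaining ingredients are the elementary facts that $\perp$ reverses inclusions and that $K \subseteq K^{\perp\perp}$, which I would state briefly without belaboring. No new combinatorial work on the graph is needed here: the argument is purely a consequence of the regularity of $J$, the basic-graded closure of the annihilator from Lemma \ref{40}, and the maximality from Theorem \ref{31}.
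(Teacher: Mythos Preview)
Your proof is correct and follows essentially the same approach as the paper: use Theorem \ref{31} to get that $K=I(H(J))$ is the largest basic, graded ideal in $J$, apply Lemma \ref{40} twice to see that $K^{\perp\perp}$ is basic and graded, use regularity of $J$ together with inclusion-reversal of $\perp$ to get $K^{\perp\perp}\subseteq J$, and conclude $K^{\perp\perp}=K$ by maximality. Your write-up is simply a more expanded version of the paper's terse argument.
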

\begin{proof}
We know that $I(H(J)) \subseteq J$ and that $I(H(J)) \subseteq I(H(J))^{\perp \perp} \subseteq J^{\perp \perp} = J.$ As $I(H(J))$ is basic and graded, by Lemma \ref{40} we have $I(H(J))^{\perp} $ and $I(H(J))^{\perp \perp} $ are basic and graded. By Theorem \ref{31}, $I(H(J))$ is the largest gauge-invariant ideal in $J$. Thus $I(H(J)) = I(H(J))^{\perp \perp}.$
\end{proof}

\begin{proposition}\label{34}
If $\Lambda$ is a locally-convex, row-finite $k$-graph satisfying Condition $(B)$, and $J$ is a basic, regular ideal in $KP_{R}(\Lambda)$, then $J$ is graded.
\end{proposition}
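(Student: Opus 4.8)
The plan is to reduce everything to Theorem \ref{33}, which already guarantees that a basic ideal $J$ is graded once the quotient graph $\Lambda \setminus H(J)$ satisfies Condition $(B)$. So the entire task becomes showing that $\Lambda \setminus H(J)$ inherits Condition $(B)$ from $\Lambda$. The difficulty is that $J$ itself is only assumed basic and regular, not graded, so Theorem \ref{5} cannot be applied to $J$ directly; I would instead route through the largest graded ideal sitting inside $J$.

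First I would invoke Lemma \ref{32}: since $J$ is basic and regular, the ideal $I(H(J)) \subseteq J$ is a \emph{regular, basic, graded} ideal. This is the key move, because it replaces our possibly-non-graded ideal $J$ by a genuinely graded, regular ideal that nonetheless carries the same vertex data. Indeed, $H(J)$ is saturated and hereditary (as it is the vertex set of an ideal), so by the lattice isomorphism of Theorem \ref{1}, whose inverse is $I \mapsto H(I)$, we have $H(I(H(J))) = H(J)$.

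Next I would apply Theorem \ref{5} to the ideal $I(H(J))$. Since $\Lambda$ satisfies Condition $(B)$ and $I(H(J))$ is regular, basic, and graded, Theorem \ref{5} tells us that the quotient graph $\Lambda \setminus H(I(H(J)))$ satisfies Condition $(B)$. Using the identification $H(I(H(J))) = H(J)$ from the previous step, this says exactly that $\Lambda \setminus H(J)$ satisfies Condition $(B)$. Finally I would feed this into Theorem \ref{33}: $J$ is a basic ideal whose quotient graph $\Lambda \setminus H(J)$ satisfies Condition $(B)$, hence $J$ is basic and graded, which is the desired conclusion.

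The main obstacle here is bookkeeping rather than genuine mathematics. One must be careful that the shorthand $\Lambda \setminus J$ appearing in Theorem \ref{5} denotes the quotient graph $\Lambda \setminus H(J)$, and that feeding the ideal $I(H(J))$ into Theorem \ref{5} yields Condition $(B)$ for the graph $\Lambda \setminus H(I(H(J)))$, which must then be identified with $\Lambda \setminus H(J)$ through the lattice isomorphism. Once this identification is made explicit, the three cited results chain together with no further computation.
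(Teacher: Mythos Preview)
Your proposal is correct and follows essentially the same route as the paper: invoke Lemma \ref{32} to see that $I(H(J))$ is regular, basic, and graded; apply Theorem \ref{5} to this ideal; then conclude via Theorem \ref{33}. You are simply more explicit than the paper about the identification $H(I(H(J))) = H(J)$, which the paper's proof uses implicitly when it passes directly from Theorem \ref{5} to the statement that $\Lambda \setminus H(J)$ satisfies Condition $(B)$.
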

\begin{proof}
As $J$ is regular, we have that $I(H(J))$ is regular by Lemma \ref{32}. Thus by Theorem \ref{5}, $ \Lambda \setminus H(J)$ satisfies Condition $(B)$. It follows that $J$ is graded by Theorem \ref{33}. 
\end{proof}

Putting together Theorem \ref{40}, Proposition \ref{34} and Theorem \ref{5} that we get the following corollary.
\begin{corollary}\label{60}
 Let $\Lambda$ be a locally-convex, row-finite $k$-graph satisfying Condition $(B)$. Let $J$ be a basic, regular ideal in $KP_{R}(\Lambda).$ Then $\Lambda \setminus J$ satisfies Condition $(B)$ and $KP_{R}(\Lambda / J) \cong KP_{R}(\Lambda \setminus H(J)).$
\end{corollary}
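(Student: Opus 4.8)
The plan is to recognize that this corollary is a formal consequence of three results already established, assembled in the correct order. The only substantive point is a dependency issue: both Theorem \ref{5} and Proposition \ref{100} require the ideal to be \emph{graded}, whereas the hypothesis only supplies that $J$ is basic and regular. So the first task is to upgrade $J$ to a graded ideal, after which both conclusions follow by direct citation.

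First I would invoke Proposition \ref{34}: since $\Lambda$ satisfies Condition $(B)$ and $J$ is a basic, regular ideal, $J$ is graded. Thus $J$ is simultaneously basic, graded, and regular, which is exactly the hypothesis set required by Theorem \ref{5}. I note that gradedness here rests in turn on Lemma \ref{32} (which promotes $I(H(J))$ to a regular, basic, graded ideal) and on Theorem \ref{33}; these have already been verified, so I would simply cite Proposition \ref{34}.

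Next, applying Theorem \ref{5} to $J$, now known to be regular, basic, and graded, yields that the quotient graph $\Lambda \setminus H(J)$ satisfies Condition $(B)$, which is the first assertion. One could equally read this off the proof of Proposition \ref{34}, where $\Lambda \setminus H(J)$ is shown to satisfy Condition $(B)$ as an intermediate step. For the second assertion, since $J$ is basic and graded, Proposition \ref{100} applies and produces the isomorphism $KP_{R}(\Lambda)/J \cong KP_{R}(\Lambda \setminus H(J))$.

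Since each step is a direct citation whose hypotheses have been arranged in advance, there is no genuine obstacle in the argument; the single point requiring care is the ordering, namely establishing gradedness via Proposition \ref{34} \emph{before} invoking Theorem \ref{5} and Proposition \ref{100}, both of which presuppose it. The role of Lemma \ref{40} in the quoted list is auxiliary: it guarantees that the basic-graded structure is preserved under the orthogonal-complement operations feeding into Lemma \ref{32}, and so it is used only indirectly rather than in the final assembly.
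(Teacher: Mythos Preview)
Your proposal is correct and matches the paper's approach: the paper simply says the corollary follows by ``putting together'' Lemma~\ref{40}, Proposition~\ref{34}, and Theorem~\ref{5}, and your assembly (Proposition~\ref{34} $\Rightarrow$ $J$ graded; Theorem~\ref{5} $\Rightarrow$ Condition~$(B)$ in the quotient; Proposition~\ref{100} $\Rightarrow$ the isomorphism) is exactly this, with the added observation that Proposition~\ref{100} is what actually delivers the isomorphism while Lemma~\ref{40} enters only indirectly through Lemma~\ref{32}. Your remark on the ordering---that gradedness must be secured before invoking Theorem~\ref{5} or Proposition~\ref{100}---is precisely the one nontrivial point.
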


\section{Background: $k$-Graph $C^{*}$-Algebras}
In the following section we will be giving background information and theorems to help us establish similar classification to the regular ideals in $k$-graph $C^{*}$-algebras. We begin by defining the Cuntz-Kreiger $\Lambda$ family for a $C^{*}$-algebra. 

\begin{definition}
Let $\Lambda$ be a row-finite $k$-graph. A \emph{Cuntz–Krieger $\Lambda$-family} in a $C^{*}$-algebra $B$ consists of a family of partial isometries $\{ s_{\lambda}:\lambda \in \Lambda \}$ satisfying the Cuntz–Krieger relations:
\begin{enumerate}
    \item[(KP1)] $\{ s_{v} :v\in \Lambda^{0} \}$ is a family of mutually orthogonal projections;
    \item[(KP2)]$s_{\lambda \mu }=s_{\lambda}s_{\mu} $ for all $\lambda , \mu \in \Lambda$ with $s(\lambda)=r(\mu)$;
    \item[(KP3)] $s^{*}_{\lambda} s_{\lambda} = s_{s(\lambda)}$;
    \item[(KP4)] $s_{v}=\sum _{\lambda \in \Lambda ^{\leq m}(v)} s_\lambda s^{*}_{\lambda}$ for all $v\in \Lambda ^{0} $ and $m\in \mathbb{N}^{k}$
\end{enumerate} 
\end{definition}
\begin{theorem}
\cite[Theorem 3.15.]{RaeburnSimsYeend} Let $(\Lambda, d)$ be a row-finite $k$-graph. Then there is a Cuntz–Krieger  $\Lambda$-family $ \{ s_{\lambda}:\lambda \in \Lambda \} $ with each $s_{\lambda}$ non-zero if and only if  $\Lambda$ is locally-convex.
\end{theorem}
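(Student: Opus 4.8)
The plan is to prove the two implications separately. For the forward direction I would argue the contrapositive: assuming $\Lambda$ is not locally-convex, I would produce a morphism $\lambda$ with $s_\lambda = 0$, contradicting the hypothesis that every $s_\lambda$ is non-zero. If local-convexity fails there are $v \in \Lambda^0$, indices $i \neq j$, and morphisms $\lambda \in v\Lambda^{e_i}$, $\mu \in v\Lambda^{e_j}$ with, say, $s(\lambda)\Lambda^{e_j} = \emptyset$. Since $v\Lambda^{e_j} \neq \emptyset$ one checks that $\Lambda^{\leq e_j}(v) = v\Lambda^{e_j}$, so (KP4) at $v$ with $m = e_j$ gives $s_v = \sum_{\beta \in v\Lambda^{e_j}} s_\beta s_\beta^*$. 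As $r(\lambda) = v$, we have $s_\lambda = s_v s_\lambda$, whence $s_\lambda s_\lambda^* = s_\lambda s_\lambda^* s_v = \sum_{\beta \in v\Lambda^{e_j}} s_\lambda (s_\lambda^* s_\beta) s_\beta^*$. The key point is that each $s_\lambda^* s_\beta$ vanishes: by the standard product formula for Cuntz--Krieger families (the analogue of Theorem \ref{7}), $s_\lambda^* s_\beta$ is supported on the minimal common extensions $\lambda\eta = \beta\zeta$, and any such extension would force $\eta \in s(\lambda)\Lambda^{e_j}$; since this set is empty there are no common extensions and $s_\lambda^* s_\beta = 0$. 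Therefore $s_\lambda s_\lambda^* = 0$, so $s_\lambda = 0$, the desired contradiction.

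For the reverse direction I would exhibit a concrete Cuntz--Krieger family on $\ell^2(\Lambda^{\leq\infty})$, the Hilbert space with orthonormal basis $\{e_x : x \in \Lambda^{\leq\infty}\}$ indexed by boundary paths. For $\lambda \in \Lambda$ define $S_\lambda$ by $S_\lambda e_x = e_{\lambda x}$ when $s(\lambda) = r(x)$ and $S_\lambda e_x = 0$ otherwise; here $\lambda x$ is defined and is again a boundary path, as recorded in the background. Its adjoint $S_\lambda^*$ sends $e_y$ to the basis vector obtained by removing an initial $\lambda$ (and to $0$ when $y$ does not begin with $\lambda$). The relations (KP1)--(KP3) are then routine: $S_v$ is the orthogonal projection onto $\overline{\mathrm{span}}\{e_x : r(x) = v\}$, giving the mutually orthogonal projections of (KP1); (KP2) is the concatenation identity $\lambda(\mu x) = (\lambda\mu)x$; and (KP3) holds because $S_\lambda^* S_\lambda e_x = e_x$ exactly when $r(x) = s(\lambda)$, using precisely that $\lambda x \in \Lambda^{\leq\infty}$ whenever $x \in s(\lambda)\Lambda^{\leq\infty}$.

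The decisive step, and the only place where local-convexity is genuinely needed, is (KP4). I would prove that for every $v \in \Lambda^0$ and $m \in \mathbb{N}^k$, each boundary path $x \in v\Lambda^{\leq\infty}$ has a unique initial segment in $\Lambda^{\leq m}(v)$, namely $x(0, l)$ with $l$ the componentwise minimum of $d(x)$ and $m$. Local-convexity enters in verifying that $x(0,l)$ actually lies in $\Lambda^{\leq m}$: in each coordinate $i$ with $l_i < m_i$ one has $l_i = d(x)_i < m_i$, and the boundary-path condition defining $\Lambda^{\leq\infty}$ forces $s(x(0,l))\Lambda^{e_i} = \emptyset$, which is exactly the membership requirement for $\Lambda^{\leq m}$. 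This unique factorization makes $\{S_\lambda S_\lambda^* : \lambda \in \Lambda^{\leq m}(v)\}$ mutually orthogonal with sum $S_v$, yielding (KP4). Finally each $S_\lambda$ is non-zero, since $s(\lambda)\Lambda^{\leq\infty} \neq \emptyset$ and any $x$ in it gives $S_\lambda e_x = e_{\lambda x} \neq 0$.

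I expect the main obstacle to be this (KP4) verification: establishing the unique-factorization statement for boundary paths requires careful bookkeeping with the defining condition of $\Lambda^{\leq\infty}$, and local-convexity is indispensable there, as it guarantees both that $\Lambda^{\leq m}(v) \neq \emptyset$ and that the relevant initial segment is unambiguously determined. By comparison the forward direction is a short computation once one selects $m = e_j$ in (KP4) and observes the vanishing of $s_\lambda^* s_\beta$.
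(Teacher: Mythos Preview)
The paper does not prove this theorem: it is quoted as background from \cite[Theorem~3.15]{RaeburnSimsYeend} with no accompanying argument, so there is no in-paper proof to compare against. Your outline is the standard one from that reference --- the contrapositive via (KP4) and the product formula for the forward direction, and the boundary-path representation on $\ell^{2}(\Lambda^{\leq\infty})$ for the converse --- and it is essentially correct.

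One small correction worth flagging: you have mislocated where local convexity is actually consumed in the reverse direction. The verification that $x(0,l)\in\Lambda^{\leq m}$ for $l=d(x)\wedge m$ follows directly from the boundary-path condition (if $l_i<m_i$ then $l_i=d(x)_i$, and the defining property of $x\in\Lambda^{\leq\infty}$ at the object $l$ already yields $x(l)\Lambda^{e_i}=\emptyset$); no convexity is needed there, and the same goes for uniqueness of the initial segment. Local convexity is genuinely used earlier, in the very definition of $S_\lambda$: one needs $\lambda x\in\Lambda^{\leq\infty}$ whenever $x\in s(\lambda)\Lambda^{\leq\infty}$, and this can fail in a non-locally-convex graph (refactor $\lambda x$ of degree $e_i+e_j$ as $\mu\nu$ with $d(\mu)=e_j$; nothing forces $s(\mu)\Lambda^{e_j}=\emptyset$ without the hypothesis). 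You invoke this step as ``recorded in the background,'' but that background statement is itself proved under the locally-convex assumption, and it is precisely where the hypothesis enters. With that adjustment your plan goes through.
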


Given a row-finite $k$-graph $(\Lambda, d)$, there is a $C^{*}$-algebra $C^{*}(\Lambda)$ generated by a universal Cuntz–Krieger $\Lambda$-family $\{s_{\lambda} : \lambda \in \Lambda \}$ \cite{RaeburnSimsYeend}. We call this algebra the $k$-graph $C^{*}$-algebra for $\Lambda$ and denote it $C^{*}(\Lambda)$.
\begin{theorem}\label{11}

\cite[Proposition 3.5]{RaeburnSimsYeend}\label{200}Let $(\Lambda, d)$ be a row-finite $k$-graph and let $\{ s_{\lambda} :\lambda \in \Lambda \} $ be a Cuntz–Krieger $\Lambda$-family. Then for $\lambda , \mu \in \Lambda$ and $q\in \mathbb{N}^{k}$ with $d(\lambda),d(\mu) \leq q$ we have \[s^{*}_{\lambda} s_{\mu} = \sum _{\lambda \alpha = \mu \beta, \lambda \alpha \in \Lambda ^{\leq q}} s_{\alpha}s_{\beta^{*}}\]
\end{theorem}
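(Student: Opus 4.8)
The plan is to reduce the whole computation to the mutual orthogonality of the range projections attached to $\Lambda^{\le q}$, and then to expand each of $s_\lambda$ and $s_\mu$ over such paths. First I would establish the auxiliary orthogonality relation: for any $\sigma,\tau\in\Lambda^{\le q}$,
\[
s_\sigma^* s_\tau = \delta_{\sigma,\tau}\, s_{s(\sigma)}.
\]
When $r(\sigma)\neq r(\tau)$ this is immediate from (KP1), since $s_\sigma^* s_\tau = s_\sigma^* s_{r(\sigma)} s_{r(\tau)} s_\tau = 0$. When $r(\sigma)=r(\tau)=:v$, the key is that (KP4) is phrased directly in terms of $\Lambda^{\le q}$: the family $\{s_\eta s_\eta^* : \eta\in v\Lambda^{\le q}\}$ consists of projections (each $s_\eta$ is a partial isometry by (KP3)) which is finite by row-finiteness and whose sum is the projection $s_v$. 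A finite sum of projections is itself a projection only if the summands are mutually orthogonal, so $s_\sigma s_\sigma^*$ and $s_\tau s_\tau^*$ are orthogonal for $\sigma\neq\tau$, giving $s_\sigma^* s_\tau = 0$; the case $\sigma=\tau$ is just (KP3).

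Next I would expand $s_\lambda$ and $s_\mu$ over their extensions lying in $\Lambda^{\le q}$. Applying (KP4) at $s(\lambda)$ with degree $q-d(\lambda)$ together with (KP2), and using that $d(\lambda)\le q$ makes $\alpha\in s(\lambda)\Lambda^{\le q-d(\lambda)}$ equivalent to $\lambda\alpha\in\Lambda^{\le q}$, I obtain
\[
s_\lambda = \sum_{\lambda\alpha\in\Lambda^{\le q}} s_{\lambda\alpha} s_\alpha^{*},
\qquad\text{hence}\qquad
s_\lambda^{*} = \sum_{\lambda\alpha\in\Lambda^{\le q}} s_\alpha s_{\lambda\alpha}^{*},
\]
and similarly $s_\mu = \sum_{\mu\beta\in\Lambda^{\le q}} s_{\mu\beta} s_\beta^{*}$. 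Multiplying these expansions and applying the orthogonality relation to $s_{\lambda\alpha}^{*} s_{\mu\beta}$ collapses the double sum onto the diagonal $\lambda\alpha=\mu\beta$; each surviving term simplifies via (KP2) (the factors $s_{s(\lambda\alpha)}$ being absorbed into $s_\alpha$ and $s_\beta^{*}$) to $s_\alpha s_\beta^{*}$, which yields
\[
s_\lambda^{*} s_\mu = \sum_{\substack{\lambda\alpha=\mu\beta \\ \lambda\alpha\in\Lambda^{\le q}}} s_\alpha s_\beta^{*},
\]
as required.

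I expect the main obstacle to be the orthogonality relation for paths of possibly different degrees in $\Lambda^{\le q}$, rather than the bookkeeping in the final sum. The subtlety is that distinct $\sigma,\tau\in\Lambda^{\le q}$ need not share a degree, so one cannot simply invoke the usual same-degree orthogonality; the resolution is precisely that (KP4) is stated for the boundary sets $\Lambda^{\le q}$, which is where local convexity enters (it is what makes each $v\Lambda^{\le q}$ a well-behaved resolution of $s_v$). Once that relation is in hand, verifying the index-set equivalence $\alpha\in s(\lambda)\Lambda^{\le q-d(\lambda)}\iff\lambda\alpha\in\Lambda^{\le q}$ and performing the final simplification are routine applications of the factorization property and (KP1)--(KP3).
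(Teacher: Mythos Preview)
The paper does not supply a proof of this statement; it is simply quoted from \cite[Proposition~3.5]{RaeburnSimsYeend}. Your argument is correct and is essentially the standard one: deduce the orthogonality $s_\sigma^* s_\tau = \delta_{\sigma,\tau}\, s_{s(\sigma)}$ for $\sigma,\tau\in\Lambda^{\le q}$ from (KP4) via the $C^*$-algebra fact that a finite sum of projections is a projection only when the summands are mutually orthogonal, then expand $s_\lambda$ and $s_\mu$ using (KP4) at $s(\lambda)$ and $s(\mu)$ and collapse the cross terms. One small remark: local convexity is not actually needed for the computation itself (the statement only assumes row-finiteness); it is relevant only insofar as it guarantees that nontrivial Cuntz--Krieger families exist, so your parenthetical about where local convexity enters is slightly misplaced but harmless.
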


Hence Theorem \ref{200} gives us that $C^{*}(\Lambda) = \overline{span} \{ s_{\alpha} s_{\beta ^{*}}: s(\beta) = s(\alpha) \}$.

Similar to the graph $C^{*}$-algebra, the universality of $C^{*}(\Lambda)$ gives us an action of $\mathbb{T}^{k}$ on $C^{*}(\Lambda)$ known as the gauge action.

\begin{definition}
Let $(\Lambda, d)$ be a row-finite $k$-graph. For $z\in \mathbb{T}^{k}$ and $n\in \mathbb{Z}^{k}$, let $z^{n}:=z^{n_{1}}_{1}...z^{n_{k}}_{k}$. Then $\{ z^{d(\lambda)}s_{\lambda}:\lambda \in \Lambda \}$ is a Cuntz–Krieger $\Lambda$-family which generates $C^{*}(\Lambda)$, and the universal property of $C^{*}(\Lambda)$ gives a homomorphism $\gamma_{z}:C^{*} (\Lambda) \mapsto C^{*}(\Lambda)$ such that $\gamma_{z}(s_{\lambda})=z^{d(\lambda)}s_{\lambda}$ for $\lambda \in \Lambda$; $\gamma_{\overline{z}}$ is an inverse for $\gamma_{z}$, so $\gamma_{z}$ is an automorphism. This action is strongly continuous and known as the \emph{gauge action}.
\end{definition}
For an ideal $I$ in $C^{*}(\Lambda)$ we denote $H(I):= \{ v \in \Lambda^{0} : p_{v} \in I \}$.
\begin{theorem}\label{10}
\cite[Theorem 5.2]{RaeburnSimsYeend} Let $(\Lambda, d)$ be a locally-convex row-finite $k$-graph. For each subset $H$
of $\Lambda^{0}$, let $I(H)$ be the closed ideal in $C^{*}(\Lambda)$ generated by $ \{ s_{v} : v \in H \}$.
\begin{enumerate}
    \item[(i)] The map $H \mapsto I(H)$ is an isomorphism of the lattice of saturated hereditary subsets of $\Lambda^{0}$ onto the lattice of closed gauge-invariant ideals of $C^{*}(\Lambda)$.
    \item[(ii)] Suppose $H$ is saturated and hereditary.
    Then $ \Lambda \setminus H$, the small category with objects $\Lambda^{0} \setminus H$, and morphisms $ \{ \lambda \in \Lambda : r(\lambda) \text{ and } s(\lambda) \in \Lambda^{0} \setminus H(I) \}$, with the factorization property $d$ inherited from $(\Lambda, d)$.
    is a locally-convex row-finite $k$-graph, 
    and $C^{*}(\Lambda)/I(H)$ is canonically isomorphic to $C^{*} (\Lambda \setminus H)$.
    \item[(iii)] If $H$ is any hereditary subset of $ \Lambda^{0}$, then $\Lambda (H)$, the small category with objects $H$ and morphisms $\{\lambda \in \Lambda: r(\lambda) \in H \}$ and the factorization property $d$ inherited from $ \Lambda$, is a locally-convex row-finite $k$-graph,
    $C^{*}(\Lambda(H))$ is canonically isomorphic to the subalgebra $C^{*}({s_{\lambda} : r(\lambda) \in H})$ of $C^{*}(\Lambda)$, and this subalgebra is a full corner in $I(H)$.
\end{enumerate}

\end{theorem}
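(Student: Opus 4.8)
The plan is to prove the three parts in the order (ii), (i), (iii), since the quotient isomorphism in (ii) is the engine that drives the injectivity statements needed for (i), and (iii) is then a corner computation that can be carried out independently once the generating families are understood. Throughout, the indispensable tool is the gauge-invariant uniqueness theorem for $k$-graph $C^{*}$-algebras: if a homomorphism out of $C^{*}(\Lambda)$ is equivariant for a gauge action and is nonzero on every vertex projection, it is injective.

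First I would establish (ii). Fix a saturated hereditary $H$ and let $q : C^{*}(\Lambda) \to C^{*}(\Lambda)/I(H)$ be the quotient map. The natural candidate is the family $\{ q(s_{\lambda}) : \lambda \in \Lambda \setminus H \}$, and I would check it is a Cuntz--Krieger $(\Lambda \setminus H)$-family. Relations (KP1)--(KP3) descend to the quotient immediately, being purely multiplicative. The content is in (KP4): one must show that for $v \in \Lambda^{0} \setminus H$ and $m \in \mathbb{N}^{k}$ the index set $(\Lambda \setminus H)^{\leq m}(v)$ equals $\{ \lambda \in \Lambda^{\leq m}(v) : s(\lambda) \notin H \}$, so that the missing terms $s_{\lambda}s_{\lambda}^{*}$ with $s(\lambda) \in H$ are precisely the ones killed in the quotient. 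This is the same local-convexity-plus-saturation argument carried out for the algebraic case in Proposition \ref{100}: if $d(\lambda) < m$ and $v(\Lambda \setminus H)^{e_{i}} = \emptyset$ while $v\Lambda^{e_{i}} \neq \emptyset$, then $s(v\Lambda^{e_{i}}) \subseteq H$ is nonempty, local convexity forces $s(v\Lambda^{e_{i}}) = s(v\Lambda^{\leq e_{i}})$, and saturation would put $v \in H$, a contradiction. Universality then yields a surjective homomorphism $C^{*}(\Lambda \setminus H) \to C^{*}(\Lambda)/I(H)$; the quotient inherits a gauge action and each $q(s_{v})$, $v \notin H$, is nonzero, so the gauge-invariant uniqueness theorem gives injectivity.

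With (ii) in hand, (i) follows from the usual back-and-forth. Since each $s_{v}$ is fixed by $\gamma$ (it has degree zero), $I(H)$ is gauge-invariant; conversely, for a gauge-invariant ideal $I$ I would set $H(I) = \{ v : s_{v} \in I \}$ and verify directly from the relations that it is hereditary and saturated --- hereditariness from $s_{s(\lambda)} = s_{\lambda}^{*} s_{v} s_{\lambda} \in I$ when $v = r(\lambda) \in H(I)$, and saturation from (KP4) applied with $m = e_{i}$. The equality $H(I(H)) = H$ is where (ii) is used: the inclusion $\supseteq$ is immediate, and if $v \notin H$ then its image under $C^{*}(\Lambda)/I(H) \cong C^{*}(\Lambda \setminus H)$ is the nonzero projection $s_{v}$, so $s_{v} \notin I(H)$. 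For $I(H(I)) = I$ the inclusion $\subseteq$ is clear, and the reverse is the main obstacle: one must recover a gauge-invariant ideal from its vertex set. I would invoke the faithful conditional expectation $\Phi$ onto the fixed-point algebra obtained by averaging $\gamma$ over $\mathbb{T}^{k}$; gauge-invariance of $I$ gives $\Phi(I) \subseteq I$, and a spanning/approximation argument with the elements $s_{\alpha}s_{\beta}^{*}$ then pushes the relevant source vertices into $H(I)$, yielding $I \subseteq I(H(I))$. That both maps respect the lattice operations is then routine.

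Finally, for (iii) I would first identify $C^{*}(\Lambda(H))$ with the subalgebra $C^{*}(\{ s_{\lambda} : r(\lambda) \in H \})$: the restricted family is a Cuntz--Krieger $\Lambda(H)$-family with nonzero projections (here $H$ need only be hereditary, so $\Lambda(H)$ is a well-defined locally-convex row-finite $k$-graph), and the gauge-invariant uniqueness theorem again supplies the isomorphism. To see this subalgebra is a full corner of $I(H)$, I would take the projection $p = \sum_{v \in H} s_{v}$, a projection in the multiplier algebra when $H$ is infinite, and use $I(H) = \overline{\operatorname{span}}\{ s_{\alpha}s_{\beta}^{*} : s(\alpha) = s(\beta) \in H \}$ together with Theorem \ref{200} to compute $p\,I(H)\,p = C^{*}(\Lambda(H))$ and to verify $\overline{I(H)\,p\,I(H)} = I(H)$, which is fullness. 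The main obstacle throughout is the reverse inclusion $I \subseteq I(H(I))$ in (i), equivalently the faithfulness of the gauge conditional expectation; everything else reduces to building Cuntz--Krieger families in quotients and corners and applying the gauge-invariant uniqueness theorem, with the only delicate combinatorial point being the (KP4) index-set computation in (ii) that mirrors Proposition \ref{100}.
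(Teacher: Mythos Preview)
This theorem is not proved in the paper: it is quoted verbatim from \cite[Theorem~5.2]{RaeburnSimsYeend} as background, with no proof supplied. There is therefore no ``paper's own proof'' to compare against. Your outline is the standard one and is essentially how Raeburn--Sims--Yeend argue in the original reference: prove the quotient isomorphism first via the gauge-invariant uniqueness theorem, then deduce the lattice isomorphism, and handle the corner separately. The combinatorial step you flag in (ii) --- identifying $(\Lambda\setminus H)^{\leq m}(v)$ with $\{\lambda\in\Lambda^{\leq m}(v):s(\lambda)\notin H\}$ via saturation and local convexity --- is exactly the argument the present paper reproduces in its proof of Proposition~\ref{100} for the Kumjian--Pask setting, so your cross-reference there is apt. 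Your sketch is correct in outline; the only place that would need genuine work in a full write-up is, as you note, the reverse inclusion $I\subseteq I(H(I))$ in (i), which in \cite{RaeburnSimsYeend} is handled by the conditional-expectation/averaging argument you describe.
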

\begin{remark}
The inverse of the lattice isomorphism in $(i)$ is $I \mapsto H(I)$.
\end{remark}

\begin{remark}
By putting together Theorems \ref{10} and \ref{11} we get the following for $J$ an ideal, generated by a saturated and hereditary $H$ of a locally-convex, row-finite $k$-graph.  $$C^{*}(\Lambda) = \overline{span} \{ s_{\alpha}s_{\beta}^{*} : s(\alpha) = s(\beta) \},$$
Hence,
$$J = \overline{span} \{p_{v}s_{\alpha}s_{\beta}^{*}, s_{\alpha}s^{*}_{\beta}p_{v} : v \in H,\}$$
$$=\overline{span}\{s_{\alpha}s_{\beta}^{*}: r(\beta) \in H \text{ or } r(\alpha) \in H \} .$$
  Since we need that $s(\alpha) = s(\beta)$ for $s_{\alpha}s^{*}_{\beta} \neq 0$ we can conclude that for $s_{\alpha}s^{*}_{\beta} \in J$ if $r(\beta) \in H$ then $s(\alpha) \in H$ as $H$ is hereditary. Similarly, we get if $r(\alpha) \in H$ then $s(\beta) \in H$.
So we have:
$$J = \overline{span} \{ s_{\alpha} s^{*}_{\beta} :r(\beta) \in H \text{ or } r(\alpha) \in H, \text{ and } s(\alpha) = s(\beta) \in H \}$$
\end{remark}

We finish this section with two additional applications of Theorem \ref{10}.
\begin{theorem}\label{21}
Let $\Lambda$ be a locally-convex $k$-graph. Let $J$ be an ideal in $C^{*}(\Lambda)$, then $I(H(J))$ is the largest gauge-invariant ideal contained in $J$.
\end{theorem}

\begin{theorem}\label{23}
Let $\Lambda$ be a locally-convex row-finite $k$-graph. Let $J$ be an ideal in $C^{*}(\Lambda)$ if $\Lambda \setminus H(J) $ satisfies Condition $(B)$ then $J$ is gauge invariant.
\end{theorem}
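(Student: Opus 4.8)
The plan is to mirror the algebraic argument of Theorem \ref{33}, replacing the graded uniqueness theorem with the Cuntz--Krieger uniqueness theorem of \cite{RaeburnSimsYeend}. First I would observe that, since $J$ is an ideal, $H(J)$ is a saturated hereditary subset of $\Lambda^{0}$. Hence by Theorem \ref{21} we have $I(H(J)) \subseteq J$, and by Theorem \ref{10}(ii) the quotient $C^{*}(\Lambda)/I(H(J))$ is canonically isomorphic to $C^{*}(\Lambda \setminus H(J))$, which is again a locally-convex row-finite $k$-graph $C^{*}$-algebra.

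Next I would let $N$ denote the image of $J$ under the composite of the quotient map $C^{*}(\Lambda) \mapsto C^{*}(\Lambda)/I(H(J))$ with this isomorphism, so that $C^{*}(\Lambda)/J \cong C^{*}(\Lambda \setminus H(J))/N$. Writing $\{t_{v}\}$ for the generating Cuntz--Krieger family of $C^{*}(\Lambda \setminus H(J))$ and $q$ for the quotient map onto $C^{*}(\Lambda \setminus H(J))/N$, the key claim is that $H(N) \subseteq (\Lambda \setminus H(J))^{0}$ is empty, equivalently $q(t_{v}) \neq 0$ for every $v \in (\Lambda \setminus H(J))^{0}$. This is the crux of the argument: if $t_{v} \in N$, then unwinding the identification of Theorem \ref{10}(ii) and using $I(H(J)) \subseteq J$ forces $s_{v} \in J$, so $v \in H(J)$, contradicting $v \in \Lambda^{0} \setminus H(J)$.

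With $q(t_{v}) \neq 0$ for all such $v$ established, and since $\Lambda \setminus H(J)$ satisfies Condition $(B)$, the Cuntz--Krieger uniqueness theorem of \cite{RaeburnSimsYeend} applies to the homomorphism $q$ and forces it to be injective. Therefore $N = 0$, which gives $J = I(H(J))$; the latter is gauge-invariant by Theorem \ref{10}(i), completing the proof.

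I expect the main obstacle to be the verification that $H(N)$ is empty, together with the slightly more delicate point of ensuring that the Cuntz--Krieger uniqueness theorem is being applied to a genuine homomorphism of the quotient $C^{*}$-algebra whose vertex projections are precisely the nonzero images $q(t_{v})$. In the $C^{*}$-setting one must check that passing to \emph{closed} ideals and their quotients preserves all the identifications used, but this is exactly what Theorem \ref{10}(ii) guarantees, so the analytic argument proceeds in parallel with the algebraic one.
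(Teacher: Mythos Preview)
Your proposal is correct and follows exactly the approach the paper intends: it explicitly says the proof ``follows the same reasoning as in the Kumjian-Pask algebra case'' (Theorem \ref{33}), and the commented-out draft proof in the source confirms the identical structure of identifying $C^{*}(\Lambda)/I(H(J))$ with $C^{*}(\Lambda\setminus H(J))$ via Theorem \ref{10}, checking $H(N)=\emptyset$, and applying the Cuntz--Krieger uniqueness theorem \cite[Theorem 4.3]{RaeburnSimsYeend}. Your write-up in fact supplies more detail on why $H(N)$ is empty than the paper does.
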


The proofs follow the same reasoning as in the Kumjian-Pask algebra case.

\section{Regular Ideals Of $k$-Graph $C^{*}$-Algebras}

In this section we give analagous proofs of those in Section 4 for the $k$-graph $C^{*}$-Algebras. As many of the proofs follow the same reasoning as the Kumjian-Pask Algebra case we omit them here when logical. We refer the reader to Section 4 for the full detais.

As we hope to give a vertex description of the gauge-invariant regular ideals in $C^{*}(\Lambda)$ we first remind the reader of the definition of a regular ideal. 
\begin{definition}
An ideal $J$ in an algebra $A$ is called regular if $J^{\perp \perp} = J$ where $J^{\perp} = \{a \in A : ax = xa = 0$ $\forall$ $x \in J\}$.
\end{definition}
We also remind that if $J$ is an ideal then $J^{\perp}$ is a regular ideal.
We will show that $J^{\perp} $ must be gauge-invariant if $J$ is. This will give us a nice starting point for the vertex description of $J^{\perp}$ and in turn $J$.

\begin{lemma}\label{20}
Let $J$ be a gauge-invariant ideal in a $k$-graph $C^{*}$-algebra $C^{*}(\Lambda)$. Then $J^{\perp}$ is a gauge-invariant regular ideal.
\end{lemma}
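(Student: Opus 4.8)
The regularity of $J^{\perp}$ is automatic, since for any ideal one has $J^{\perp\perp\perp}=J^{\perp}$, and this is already noted above the statement. Hence the entire content of the lemma is to show that $J^{\perp}$ is gauge-invariant, i.e.\ that $\gamma_{z}(J^{\perp})=J^{\perp}$ for every $z\in\mathbb{T}^{k}$. I would \emph{not} try to mimic the grading-decomposition argument of Lemma \ref{101}, because the $\mathbb{Z}^{k}$-grading of $C^{*}(\Lambda)$ is only topological (it arises as the spectral subspaces of the gauge action) and elements need not admit finite homogeneous decompositions. Instead the clean route is to exploit the gauge automorphisms $\gamma_{z}$ directly.

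The core step is the following. Fix $z\in\mathbb{T}^{k}$ and let $a\in J^{\perp}$. To show $\gamma_{z}(a)\in J^{\perp}$, take an arbitrary $x\in J$. Since $J$ is gauge-invariant, $\gamma_{\overline{z}}(x)\in J$, so by definition of $J^{\perp}$ we have $a\,\gamma_{\overline{z}}(x)=\gamma_{\overline{z}}(x)\,a=0$. Applying the algebra homomorphism $\gamma_{z}$ and using $\gamma_{z}\circ\gamma_{\overline{z}}=\mathrm{id}$ gives
\[
\gamma_{z}(a)\,x=\gamma_{z}\bigl(a\,\gamma_{\overline{z}}(x)\bigr)=0=\gamma_{z}\bigl(\gamma_{\overline{z}}(x)\,a\bigr)=x\,\gamma_{z}(a).
\]
As $x\in J$ was arbitrary, $\gamma_{z}(a)\in J^{\perp}$, whence $\gamma_{z}(J^{\perp})\subseteq J^{\perp}$.

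Finally, running the same inclusion with $\overline{z}$ in place of $z$ yields $\gamma_{\overline{z}}(J^{\perp})\subseteq J^{\perp}$, and applying $\gamma_{z}$ gives the reverse inclusion $J^{\perp}\subseteq\gamma_{z}(J^{\perp})$; hence $\gamma_{z}(J^{\perp})=J^{\perp}$ for all $z$, which is exactly gauge-invariance. There is essentially no obstacle beyond correctly invoking the hypothesis: the whole argument turns on the single fact that gauge-invariance of $J$ lets us pull each $x\in J$ back to $\gamma_{\overline{z}}(x)\in J$, after which multiplicativity of $\gamma_{z}$ does the rest. I would also record (as a routine remark) that $J^{\perp}$ is a closed two-sided ideal, closedness coming for free since it is an intersection of kernels of the continuous left- and right-multiplication maps, but this is standard and not where the difficulty lies.
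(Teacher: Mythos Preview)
Your proof is correct and follows essentially the same approach as the paper: both note that regularity of $J^{\perp}$ is automatic, then show gauge-invariance by writing $b=\gamma_{z}(\gamma_{\overline{z}}(b))$ for $b\in J$, using gauge-invariance of $J$ to conclude $\gamma_{\overline{z}}(b)\in J$, and applying multiplicativity of $\gamma_{z}$. Your write-up is slightly more thorough in explicitly deducing the reverse inclusion $J^{\perp}\subseteq\gamma_{z}(J^{\perp})$, whereas the paper stops at $\gamma_{z}(J^{\perp})\subseteq J^{\perp}$ (which, since it holds for all $z$, is already enough).
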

\begin{proof}
For an ideal $J$ we know that $J^{\perp}$ is always a regular ideal.
It remains to show that it is gauge-invariant. Suppose that $a \in J^{\perp}$ then for any $z \in \mathbb{T}^{k}$ we have that:
$$ \{ \gamma_{z}(a)b : b \in J \} = \{ \gamma_{z}(a)\gamma_{z}(\gamma_{\overline{z}}(b)) : b \in J \} = \{\gamma_{z}(a \gamma_{\overline{z}}(b)) : b \in J \} = \{0\}$$
as $\gamma_{\overline{z}}(b) \in J$ since $J$ gauge-invariant.
Similarly:
$$ \{ b \gamma_{z}(a): b \in J \} = \{ \gamma_{z}(\gamma_{\overline{z}}(b)) \gamma_{z}(a) : b \in J \} = \{ \gamma_{z}(\gamma_{\overline{z}}(b)a) : b \in J \} = \{ 0 \}.$$
So we have $\gamma_{z}(a) \in J$.

\end{proof}
We give now the $C^{*}$-algebra definitions that are analogues of the ones used in the regular ideal section for Kumjian-Pask algebras:

\begin{definition}
\begin{enumerate}
    \item[(i)] For $w \in \Lambda^{0}$, put $T(w) = \{ s(\lambda) : \lambda \in \Lambda, r(\lambda) = w \}$.
    \item[(ii)] If $ I \subseteq C^{*}(\Lambda)$ an ideal, let  $\overline{H}(I) \subseteq \Lambda^{0}$ be the set $\overline{H}(I) = \{ r(\lambda) : \lambda \in \Lambda$ and $s(\lambda) \in H(I) \}$.
\end{enumerate} 
\end{definition}

\begin{lemma}\label{50}
Let $\Lambda$ be a row finite, locally-convex $k$-graph. Let $H$ be a hereditary and saturated subset of $\Lambda^{0}$. Let $J$ be the ideal generated by $H$. Then $$ H(J^{\perp}) = \{v \in \Lambda^{0} : v \Lambda H = \emptyset\}=\Lambda^{0} \setminus \bar{H}(J).$$
\end{lemma}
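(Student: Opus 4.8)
The plan is to adapt the proof of Lemma \ref{2} to the $C^*$-setting, the only new ingredient being that $J$ is now a closed ideal. Combining Theorems \ref{10} and \ref{11} gives the spanning description
$$J = \overline{span}\{\, s_\alpha s_\beta^* : r(\alpha) \in H \text{ or } r(\beta) \in H,\ s(\alpha) = s(\beta) \in H \,\},$$
so that, unwinding the definition of $H(J^\perp)$, a vertex $v$ lies in $H(J^\perp)$ precisely when $p_v a = a p_v = 0$ for every $a \in J$.

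First I would reduce this condition from all of $J$ to the spanning generators $s_\alpha s_\beta^*$. Since left and right multiplication by the projection $p_v$ are norm-continuous, the sets $\{a : p_v a = 0\}$ and $\{a : a p_v = 0\}$ are closed; hence it suffices to test $p_v s_\alpha s_\beta^* = 0$ and $s_\alpha s_\beta^* p_v = 0$ on the generators, whose span is dense in $J$. This continuity reduction is the single point at which the argument departs from the purely algebraic Lemma \ref{2}, and it is where I expect the only (mild) subtlety to arise.

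Next I would evaluate the products on generators. From $p_v s_\alpha = \delta_{v,r(\alpha)} s_\alpha$ and $s_\beta^* p_v = \delta_{v,r(\beta)} s_\beta^*$, both consequences of (KP1) and (KP2), a nonzero generator satisfies $p_v s_\alpha s_\beta^* = 0$ if and only if $v \neq r(\alpha)$, and $s_\alpha s_\beta^* p_v = 0$ if and only if $v \neq r(\beta)$. Every generator has $s(\alpha) = s(\beta) \in H$, so $\alpha$ and $\beta$ are paths with source in $H$; conversely, any path $\alpha$ with $s(\alpha) \in H$ yields the generator $s_\alpha = s_\alpha s_{s(\alpha)}^* \in J$. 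Hence $v \in H(J^\perp)$ if and only if $v$ is not the range $r(\alpha)$ of any path $\alpha$ with $s(\alpha) \in H$, i.e.\ $v\Lambda H = \emptyset$; this is the first displayed equality.

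Finally I would match this with $\Lambda^0 \setminus \bar{H}(J)$. Since $H(J) = H$ and $\bar{H}(J)$ is by definition the set of ranges $r(\lambda)$ of paths $\lambda$ with $s(\lambda) \in H$, a vertex lies in $\bar{H}(J)$ exactly when $v\Lambda H \neq \emptyset$, so its complement is precisely $\{v : v\Lambda H = \emptyset\}$. This closes the chain of equalities; apart from the continuity step, the proof is a direct transcription of Lemma \ref{2}.
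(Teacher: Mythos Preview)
Your proposal is correct and follows essentially the same approach as the paper: the paper's proof simply writes down the reduction of $H(J^\perp)$ to the condition $p_v s_\alpha s_\beta^* = s_\alpha s_\beta^* p_v = 0$ on generators and then refers back to Lemma~\ref{2}. You have made explicit the one point the paper leaves implicit, namely the continuity argument justifying the passage from the closed ideal $J$ to its dense spanning set, and otherwise your argument is a faithful (and more detailed) transcription of the Kumjian--Pask case.
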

\begin{proof}

Since we define $H(J^{\perp})$ to be $\{ v \in \Lambda^{0} : p_{v} \in J^{\perp} \}$,
  $$ H(J^{\perp}) = \{v \in C^{*}(\Lambda) : p_{v}s_{\alpha}s^{*}_{\beta}= s_{\alpha}s^{*}_{\beta}p_v = 0 \text{ for all } s_{\alpha}s^{*}_{\beta} \in J \}$$

The rest of the proof follows a similar reasoning to the Kumjian-Pask case.
\end{proof}

\begin{theorem}\label{4}
Let $\Lambda$ be a locally-convex, row-finite $k$-graph. Let $J \subseteq C^{*}(\Lambda)$ be a gauge-invariant ideal. Then: 
\begin{enumerate}
    \item[(i)] $J^{\perp} = I( \Lambda^0 \setminus \bar{H} (J) )$;
    \item[(ii)] $J^{\perp \perp} = I ( \{ w \in \Lambda ^{0} : T(w) \subseteq \bar{H}(J) \}  );$ and
    \item[(iii)] $J$ is regular if and only if $H(J) = \{ w \in \Lambda ^{0} : T(w) \subseteq \bar{H}(J) \}$.
\end{enumerate}
\end{theorem}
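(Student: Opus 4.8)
The plan is to run the argument of Theorem \ref{3} essentially verbatim, replacing each algebraic ingredient by its $C^{*}$-algebraic counterpart: the lattice isomorphism of Theorem \ref{1} by the gauge-invariant lattice isomorphism of Theorem \ref{10}, the stability result Lemma \ref{40} by Lemma \ref{20}, and the vertex computation Lemma \ref{2} by Lemma \ref{50}. Since $J$ is gauge-invariant, Theorem \ref{10}(i) gives $J = I(H(J))$ with $H(J)$ saturated and hereditary, so every subsequent step is an application of the three results just listed.

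For (i), I would first invoke Lemma \ref{20} to see that $J^{\perp}$ is again a (closed) gauge-invariant ideal; hence by Theorem \ref{10}(i) it is recovered from its vertex set as $J^{\perp} = I(H(J^{\perp}))$. Lemma \ref{50} then identifies $H(J^{\perp}) = \Lambda^{0} \setminus \bar{H}(J)$, yielding $J^{\perp} = I(\Lambda^{0} \setminus \bar{H}(J))$, which is exactly (i).

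For (ii), the idea is to apply (i) with $J^{\perp}$ in place of $J$ -- legitimate precisely because $J^{\perp}$ is gauge-invariant -- to get $J^{\perp \perp} = I(\Lambda^{0} \setminus \bar{H}(J^{\perp}))$. The one genuine computation is to rewrite $\Lambda^{0} \setminus \bar{H}(J^{\perp})$ in the stated form. Unwinding the definitions, $\bar{H}(J^{\perp}) = \{ r(\lambda) : s(\lambda) \in H(J^{\perp}) \} = \{ r(\lambda) : s(\lambda) \notin \bar{H}(J) \}$, so a vertex $w$ lies in the complement precisely when every $\lambda$ with $r(\lambda) = w$ satisfies $s(\lambda) \in \bar{H}(J)$, i.e. when $T(w) \subseteq \bar{H}(J)$. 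This gives $\Lambda^{0} \setminus \bar{H}(J^{\perp}) = \{ w \in \Lambda^{0} : T(w) \subseteq \bar{H}(J) \}$ and hence (ii).

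Finally, for (iii), both $J$ and $J^{\perp \perp}$ are gauge-invariant (apply Lemma \ref{20} twice), so the injectivity of the lattice isomorphism in Theorem \ref{10}(i) reduces the identity $J = J^{\perp \perp}$ to the equality of vertex sets $H(J) = H(J^{\perp \perp})$; substituting the description of $H(J^{\perp \perp})$ obtained in (ii) gives the stated criterion. I expect the only real work to be the bookkeeping in the complement computation of part (ii); everything else is a direct transcription of the Kumjian-Pask argument, with the closedness of the annihilator ideal being the sole point where the $C^{*}$-setting genuinely differs, and that is automatic.
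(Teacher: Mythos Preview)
Your proposal is correct and follows essentially the same route as the paper: invoke Theorem \ref{10} to write $J = I(H(J))$, use Lemma \ref{20} to see $J^{\perp}$ is gauge-invariant, and Lemma \ref{50} to compute $H(J^{\perp})$, then iterate. The paper dispatches parts (ii) and (iii) with ``The rest follow,'' whereas you have spelled out the complement computation and the injectivity-of-the-lattice argument explicitly; your version is simply a more detailed rendering of the same proof.
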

\begin{proof}
From \cite{RaeburnSimsYeend} [Theorem 5.2] we know that $J$ must be generated by $\{ p_{v} :v \in H \}$ for some saturated and hereditary set $H$. Thus by Lemma \ref{50}, We know that $H(J^{\perp}) = \Lambda^{0} \setminus \bar{H}(J)$. But from Lemma \ref{20} we know that $J^{\perp}$ is gauge-invariant. So using \cite{RaeburnSimsYeend} [Th. 5.2] it must be that $J^{\perp}  = I (\Lambda^0 \setminus \bar{H} (J))$. Proving i. The rest follow. 
\end{proof}

\begin{theorem}\label{6}
Suppose that $\Lambda$ is a row-finite, locally-convex $k$-graph which satisfies Condition ($B$). 
 If $J$ is a regular gauge-invariant ideal, then $ \Lambda \setminus J$ satisfies Condition $(B)$.

\end{theorem}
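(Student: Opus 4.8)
The plan is to run the argument of Theorem \ref{5} essentially verbatim, replacing each Kumjian--Pask tool by its $C^{*}$-algebraic counterpart. First I would record that, since $J$ is gauge-invariant, Lemma \ref{20} gives that $J^{\perp}$ is gauge-invariant as well. Applying Lemma \ref{50} to $J^{\perp}$ and using regularity ($J^{\perp\perp}=J$) then identifies the vertex set of the quotient graph $\Lambda\setminus H(J)$ (which is a locally-convex, row-finite $k$-graph by Theorem \ref{10}(ii)):
\[
(\Lambda\setminus J)^{0}=\Lambda^{0}\setminus H(J)=\overline{H}(J^{\perp}).
\]
This splits the vertices of the quotient into the two pieces $H(J^{\perp})$ and $\overline{H}(J^{\perp})\setminus H(J^{\perp})$, which I would handle separately.

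For a vertex $v\in H(J^{\perp})$, Condition $(B)$ for $\Lambda$ supplies a boundary path $x\in v\Lambda^{\leq\infty}$ with $\alpha x\neq\beta x$ whenever $\alpha\neq\beta$. The point is that $H(J^{\perp})$ is saturated and hereditary (because $J^{\perp}$ is an ideal), so every vertex $x(i,i)$ along $x$ lies in $H(J^{\perp})$; heredity then forces every edge with source in $H(J^{\perp})$ to survive in the quotient, giving $u(\Lambda\setminus H(J))^{e_{i}}=u\Lambda^{e_{i}}$ for each such $u$, and hence the same equality for the $\leq e_{i}$ sets. Consequently $x$ is still a boundary path in $\Lambda\setminus H(J)$, i.e. $x\in(\Lambda\setminus J)^{\leq\infty}$, and since concatenation is inherited from $\Lambda$ the distinguishing property $\alpha x\neq\beta x$ persists in the quotient. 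Thus $v$ satisfies Condition $(B)$ in $\Lambda\setminus H(J)$.

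For a vertex $w\in\overline{H}(J^{\perp})\setminus H(J^{\perp})$, I would choose a finite path $\gamma$ with $r(\gamma)=w$ and $s(\gamma)\in H(J^{\perp})$. Every intermediate vertex of $\gamma$ again lies in $\overline{H}(J^{\perp})$, so $\gamma$ is a path in $\Lambda\setminus H(J)$, and $\gamma x_{s(\gamma)}\in(\Lambda\setminus J)^{\leq\infty}$ by the previous paragraph and the fact that a finite path concatenated with a boundary path is a boundary path. Because right-concatenation by $\gamma$ is injective ($\alpha\neq\beta$ implies $\alpha\gamma\neq\beta\gamma$) and $x_{s(\gamma)}$ distinguishes at $s(\gamma)$, the path $\gamma x_{s(\gamma)}$ witnesses Condition $(B)$ at $w$, completing the verification for all quotient vertices.

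The main obstacle is the middle step: checking that $x$ remains a boundary path once we pass to $\Lambda\setminus H(J)$, since the sets $(\Lambda\setminus H(J))^{\leq e_{i}}$ are a priori computed in a smaller graph than $\Lambda$. This is precisely where heredity of $H(J^{\perp})$ is essential---it guarantees that no edge emanating from a vertex below $v$ is deleted in passing to the quotient, so the local ``$\leq e_{i}$'' structure along $x$ is unchanged and the defining boundary-path condition transfers cleanly. Everything else is formal and parallels the proof of Theorem \ref{5}, now using the $C^{*}$-analogues Lemma \ref{20}, Lemma \ref{50}, and Theorem \ref{10}.
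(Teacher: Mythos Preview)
Your proposal is correct and follows essentially the same route as the paper: the paper simply defers to the proof of Theorem~\ref{5}, observing that the vertex description of regular ideals and Condition~$(B)$ are graph-theoretic notions, so the Kumjian--Pask argument carries over verbatim with Lemmas~\ref{20} and~\ref{50} (and Theorem~\ref{10}) in place of their algebraic counterparts. Your write-up in fact supplies more detail than the paper does on why the boundary path $x$ survives in $\Lambda\setminus H(J)$.
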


The proof follows a similar reasoning as the Kumjian-Pask algebra case since the regular ideals of both have the same vertex description and satisfying Condition $(B)$ is a property of the graph. We further remind the reader that for a row-finite $k$-graph without sources, satisfying Condition $(B)$ is equivalent to being aperiodic \cite[Lemma 8.4]{ClarkFlynnHuef}, So the corollary follows.
\begin{corollary}
 Let $\Lambda$ be a row finite $k$-graph with no sources which is aperiodic. Let $J$ be a gauge-invariant regular ideal of $C^{*}(\Lambda)$ then $ \Lambda / J $ is aperiodic.
\end{corollary}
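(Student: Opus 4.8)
The plan is to deduce this from Theorem \ref{6} by moving between aperiodicity and Condition $(B)$ via \cite[Lemma 8.4]{ClarkFlynnHuef}, which states that for a row-finite $k$-graph with no sources these two conditions are equivalent. First I would check that $\Lambda$ meets the hypotheses of Theorem \ref{6}: since $\Lambda$ is row-finite with no sources it is automatically locally-convex by \cite[Remark 3.10]{RaeburnSimsYeend}, and since it is aperiodic it satisfies Condition $(B)$ by the cited equivalence. With these in hand, Theorem \ref{6} applies verbatim to the regular, gauge-invariant ideal $J$ and yields that the quotient graph $\Lambda \setminus J$ satisfies Condition $(B)$.

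The remaining point is to convert Condition $(B)$ back into aperiodicity, and for this I must verify that $\Lambda \setminus J$ is again a graph with no sources, since \cite[Lemma 8.4]{ClarkFlynnHuef} is only available in the no-source setting. This is the one step that requires an argument rather than a citation. Fix $v \in (\Lambda \setminus J)^{0} = \Lambda^{0} \setminus H(J)$ and an index $1 \leq i \leq k$. Because $\Lambda$ has no sources, $v\Lambda^{e_{i}} \neq \emptyset$, and this nonemptiness means no truncation occurs, so $v\Lambda^{\leq e_{i}} = v\Lambda^{e_{i}}$. If every $\lambda \in v\Lambda^{e_{i}}$ satisfied $s(\lambda) \in H(J)$, then $s(v\Lambda^{\leq e_{i}}) \subseteq H(J)$, and saturation of $H(J)$ (which holds since $J$ is an ideal) would force $v \in H(J)$, a contradiction. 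Hence some $\lambda \in v\Lambda^{e_{i}}$ has $s(\lambda) \notin H(J)$; such a $\lambda$ survives in $\Lambda \setminus J$, so $v(\Lambda \setminus J)^{e_{i}} \neq \emptyset$. As $v$ and $i$ were arbitrary, $\Lambda \setminus J$ has no sources.

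Combining the two previous paragraphs, $\Lambda \setminus J$ is a row-finite $k$-graph with no sources that satisfies Condition $(B)$, so \cite[Lemma 8.4]{ClarkFlynnHuef} gives that $\Lambda \setminus J = \Lambda / J$ is aperiodic, as claimed. I expect the preservation of the no-source property to be the only genuine content of the argument; everything else is a direct invocation of Theorem \ref{6} together with the stated equivalence. One subtlety worth flagging is that the saturation argument uses only that $H(J)$ is saturated, not regularity of $J$ itself, so regularity enters solely through the hypotheses of Theorem \ref{6}.
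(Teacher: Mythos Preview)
Your proposal is correct and follows the same route the paper indicates: apply Theorem~\ref{6} and then translate between Condition~$(B)$ and aperiodicity via \cite[Lemma 8.4]{ClarkFlynnHuef}. The paper's own argument for this corollary is just the one-line remark that it ``follows'' from this equivalence, so your write-up is in fact more complete: you explicitly verify that $\Lambda \setminus J$ has no sources, a point the paper leaves implicit but which is needed to invoke the equivalence in the reverse direction.
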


\begin{lemma}\label{22}
Let $\Lambda$ be a locally-convex, row-finite $k$-graph. Let $J$ be a regular ideal of $C^{*}(\Lambda)$. Then $I(H(J)) \subseteq J$ is a regular gauge-invariant ideal.
\end{lemma}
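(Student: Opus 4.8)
The plan is to mirror the proof of Lemma~\ref{32} in the $C^{*}$-algebraic setting, replacing ``basic and graded'' by ``gauge-invariant'' and citing the gauge-invariant analogues established earlier in this section.

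First I would observe that, since $J$ is an ideal, $H(J)$ is saturated and hereditary, so that $I(H(J))$ is a well-defined gauge-invariant ideal by Theorem~\ref{10}. By Theorem~\ref{21}, $I(H(J))$ is in fact the \emph{largest} gauge-invariant ideal contained in $J$; in particular $I(H(J)) \subseteq J$. This maximality is the property I will play against the double annihilator to force regularity.

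Next I would show that the double annihilator $I(H(J))^{\perp\perp}$ is again gauge-invariant and sandwiched between $I(H(J))$ and $J$. Gauge-invariance follows from applying Lemma~\ref{20} twice: since $I(H(J))$ is gauge-invariant, $I(H(J))^{\perp}$ is gauge-invariant, and a second application shows $I(H(J))^{\perp\perp}$ is gauge-invariant. For the containment I would use that $(\cdot)^{\perp}$ reverses inclusions, so from $I(H(J)) \subseteq J$ we get $J^{\perp} \subseteq I(H(J))^{\perp}$ and hence $I(H(J))^{\perp\perp} \subseteq J^{\perp\perp}$; since $J$ is regular, $J^{\perp\perp} = J$, giving
\[ I(H(J)) \subseteq I(H(J))^{\perp\perp} \subseteq J, \]
where the left inclusion is the universal fact that $N \subseteq N^{\perp\perp}$ for any ideal $N$.

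Finally, I would invoke maximality: $I(H(J))^{\perp\perp}$ is a gauge-invariant ideal contained in $J$, so Theorem~\ref{21} forces $I(H(J))^{\perp\perp} \subseteq I(H(J))$. Combined with the containment $I(H(J)) \subseteq I(H(J))^{\perp\perp}$, this yields $I(H(J)) = I(H(J))^{\perp\perp}$, i.e. $I(H(J))$ is regular. I do not expect a serious obstacle here, since every ingredient is already in place; the one point requiring care is that taking annihilators preserves gauge-invariance, which is precisely why Lemma~\ref{20} must be applied twice, together with the elementary observations that $(\cdot)^{\perp}$ is inclusion-reversing and that $N \subseteq N^{\perp\perp}$ always holds.
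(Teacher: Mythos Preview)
Your proposal is correct and follows exactly the argument the paper uses (which is stated there only as ``similar to the Kumjian-Pask algebra case,'' i.e., Lemma~\ref{32}): establish $I(H(J)) \subseteq I(H(J))^{\perp\perp} \subseteq J^{\perp\perp}=J$, use Lemma~\ref{20} twice to see $I(H(J))^{\perp\perp}$ is gauge-invariant, and then invoke the maximality from Theorem~\ref{21} to collapse the chain. Your write-up is in fact a bit more explicit than the paper's about why the inclusions hold, but the route is the same.
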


The proof is again similar to the Kumjian-Pask algebra case.

\begin{proposition}\label{24}
If $\Lambda$ is a locally-convex, row-finite $k$-graph satisfying Condition $(B)$, and $J$ is a regular ideal in $C^{*}(\Lambda)$, then $J$ is gauge-invariant.
\end{proposition}
\begin{proof}
As $J$ is regular, we have that $I(H(J))$ is regular by lemma \ref{22}. Thus by Theorem \ref{6} $ \Lambda \setminus H(J)$ satisfies Condition $(B)$. It follows that $J$ is gauge invariant by Theorem \ref{23}. 
\end{proof}

Putting together Theorem \ref{10}, Proposition \ref{24} and Theorem \ref{6} that we get the following Corollary.
\begin{corollary}\label{61}
 Let $\Lambda$ be a row-finite, locally-convex $k$-graph satisfying Condition $(B)$. Let $J$ be a regular ideal in $C^{*}(\Lambda).$ Then $\Lambda \setminus J$ satisfies Condition $(B)$ and $C^{*}(\Lambda) / H(J) \cong C^{*}(\Lambda \setminus H(J)).$
\end{corollary}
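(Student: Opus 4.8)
The plan is to reduce the corollary entirely to the three results it cites, exploiting the fact that the regularity hypothesis on $J$, combined with Condition $(B)$ on $\Lambda$, already forces $J$ to be gauge-invariant; once that is known, the gauge-invariant lattice machinery of Theorem \ref{10} applies verbatim. The argument is therefore a packaging of Proposition \ref{24}, Theorem \ref{6}, and Theorem \ref{10}, exactly as the sentence preceding the corollary advertises.

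First I would invoke Proposition \ref{24}: since $\Lambda$ is locally-convex, row-finite, and satisfies Condition $(B)$, and $J$ is regular, $J$ is gauge-invariant. This is the step that promotes an arbitrary regular ideal into the realm governed by the earlier lattice theory, and it is the real content of the proof. With $J$ now known to be both regular and gauge-invariant, the hypotheses of Theorem \ref{6} are met, and I would apply it to conclude that $\Lambda \setminus H(J)$ satisfies Condition $(B)$; this is the first assertion (here reading $\Lambda \setminus J$ as $\Lambda \setminus H(J)$, consistent with the usage in Theorem \ref{6}).

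For the isomorphism, I would use that gauge-invariance of $J$ places it in the image of the lattice isomorphism $H \mapsto I(H)$ of Theorem \ref{10}(i), whose inverse is $I \mapsto H(I)$; hence $J = I(H(J))$. Theorem \ref{10}(ii) then supplies the canonical isomorphism $C^{*}(\Lambda)/J = C^{*}(\Lambda)/I(H(J)) \cong C^{*}(\Lambda \setminus H(J))$, which is the second assertion (reading the displayed $C^{*}(\Lambda)/H(J)$ as the intended quotient $C^{*}(\Lambda)/J$).

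I do not anticipate any substantive obstacle: the only point requiring care is confirming that the gauge-invariance demanded as a hypothesis in both Theorem \ref{6} and Theorem \ref{10}(ii) is genuinely available, and this is precisely what Proposition \ref{24} delivers from regularity alone. The remaining work is purely a matter of chaining the three citations in the correct order and keeping the notational identification $\Lambda \setminus J = \Lambda \setminus H(J)$ straight.
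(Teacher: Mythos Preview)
Your proposal is correct and follows exactly the approach the paper indicates: the sentence preceding the corollary states that it is obtained by combining Theorem \ref{10}, Proposition \ref{24}, and Theorem \ref{6}, and you have chained these three results in the right order, including correctly noting the notational identifications $\Lambda \setminus J = \Lambda \setminus H(J)$ and $C^{*}(\Lambda)/H(J) = C^{*}(\Lambda)/J$.
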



\begin{thebibliography}{9}


\bibitem{Kumjian Pask Algebra}  Aranda Pino, Gonzalo; Clark, John; an Huef, Astrid; Raeburn, Iain. \textit{Kumjian-Pask Algebras of Higher-Rank Graphs}. Trans. Amer. Math. Soc. 365 (2013), no. 7, 3613–3641.

\bibitem{BrownFullerPittsReznikoff} Brown, J. H.; Fuller, A. H.; Pitts D. R.; Reznikoff S. A.. \textit{Regular Ideals of Graph Algebras}, (2020) arXiv:2006.00395 [math.OA].

\bibitem{ClarkFlynnHuef} Clark, Lisa Orloff; Flynn, Claire; an Huef, Astrid. \textit{Kumjian-Pask algebras of locally convex higher-rank graphs}. (English summary) J. Algebra 399 (2014), 445–474. 16S99. MR 3144598 

\bibitem{Hamana} Masamichi Hamana, \textit{The centre of the regular monotone completion of a } $C^{*}$\textit{-algebra}, J. London Math. Soc. (2)26 (1982), no. 3, 522–530.  MR 684565

\bibitem{GoncalvesRoyer} \c Gonçalves, Daniel; Royer, Danilo. \textit{A Note on the Regular Ideals of Leavitt Path Algebras}.(2020).arXiv:2006.03634 [math.RA]

\bibitem{Kumjian Pask} Kumjian, Alex; Pask, David. \textit{Higher Rank Graph} $C^{*}$\textit{-Algebras}. New York J. Math. 6 (2000), 1–20. 
\bibitem{RaeburnSimsYeend} Raeburn, Iain; Sims, Aidan; Yeend, Trent. \textit{Higher-Rank Graphs and Their } $C^{*}$\textit{-Algebras}. Proc. Edinb. Math. Soc. (2) 46 (2003), no. 1, 99–115.

\bibitem{Sims} Sims, Aidan. \textit{Gauge-Invariant Ideals in the} $C^{*}$\textit{-Algebras of Finitely Aligned Higher-Rank Graphs}. Canad. J. Math. 58 (2006), no. 6, 1268–1290.
\end{thebibliography}
\end{document}